\newtheorem{thm}{Theorem}[section]
\newtheorem{cor}{Corollary}[section]
\newtheorem{lemma}{Lemma}[section]
\newtheorem{assumption}{Assumption}[section]
\newtheorem{remark}{Remark}[section]
\newtheorem{defn}{Definition}[section]
\newtheorem{example}{Example}[section]
\newcounter{nextauthor}
\def\mathrm{\mbox}
\numberwithin{remark}{section}
\begin{document}
\title{{\Large \bf  Strong solutions for jump-type stochastic differential equations with non-Lipschitz coefficients}\thanks{This work was supported by  the National Natural Science Foundation of China (11471230, 11671282).}}
\author{Zhun Gou, Ming-hui Wang and Nan-jing Huang\footnote{Corresponding author.  E-mail addresses: nanjinghuang@hotmail.com; njhuang@scu.edu.cn}\\
{\small\it Department of Mathematics, Sichuan University, Chengdu,	Sichuan 610064, P.R. China}}
\date{}
\maketitle
\begin{center}
\begin{minipage}{5.5in}
\noindent{\bf Abstract.} In this paper, the existence and pathwise uniqueness of strong solutions for jump-type stochastic differential equations are investigated under non-Lipschitz conditions.  A sufficient condition is obtained for ensuring the non-confluent property of strong solutions of jump-type stochastic differential equations. Moreover, some examples are given to illustrate our results.
\\ \ \\
{\bf Keywords:} Jump-type stochastic differential equation; Strong solution;  Non-explosive solution;  Non-confluent property; Non-Lipschitz condition.
\\ \ \\
{\bf 2010 Mathematics Subject Classification}: 60H10, 60J75.
\end{minipage}
\end{center}

\section{Introduction}
\paragraph{}
Jump-type stochastic differential equations (JSDEs), as natural extensions of stochastic differential equations (SDEs),  have been widely applied to many fields of science and engineering such as  physics, astronomy, finance, ecology, biology and so on. As for the applications in physics, Chudley and Elliott \cite{Chudley1961Neutron} applied JSDEs to describe atomic diffusion typically consists of jumps between vacant lattice sites. Bergquist et al.  \cite{ber} illustrated the quantum jumps in a single atom by JSDEs. Gleyzes et al. \cite{Gle} employed JSDEs to analyze the observation that the microscopic quantum system exhibits at random times sudden jumps between its states. Pellegrini \cite{pell} proved the existence and uniqueness of a solution for the jump-type stochastic Schr$\ddot{o}$dinger equations. As for the applications in finance, Shreve \cite{shreve} and Tankov \cite{tankov2003financial} have enumerated many financial models which can be described by JSDEs. Thus, it would be necessary to study some properties of solutions to JSDEs. In this paper, we mainly investigate some qualitative properties of solutions to JSDEs under non-Lipschitz conditions.

The linear growth condition guarantees that the solutions for JSDEs has no finite explosion time with probability one. However, the linear growth condition may not be satisfied in some practical situations. For instance, in the mathematical ecological models of \cite{khasminskii2001long, mao2002environmental}, the coefficients do not satisfy the linear growth condition while non-explosion is still guaranteed. Some non-explosive results for general SDEs without jumps under the linear growth condition can be found in \cite{Fang2003A, fang2003stochastic, Lan2014New}. Thus, one natural question is: can we relax the linear growth condition for JSDEs?  The first task of this paper is to provide a new sufficient super linear growth condition for ensuring the non-explosion of strong solutions for JSDEs.

In general, the usual method for studying the pathwise uniqueness of strong solutions for SDEs with Lipschitz conditions is to employ Gronwall's inequality to demonstrate that the distance $\mathbb{E}^{\frac{1}{2}}[|\widetilde{X}(t)-X(t)|^2]$ between two solutions $\widetilde{X}(t)$ and $X(t)$ vanishes \cite{ikeda2014stochastic}. Unfortunately, as pointed out by Fang and Zhang \cite{fang2005study}, the usual method employed in the previous literature is not applicable without the usual Lipschitz condition. In 1971, Yamada and Watanabe \cite{Yamada1971On} showed that the Lipschitz condition can be relaxed to the H\"{o}lder condition for one-dimensional SDEs. Recently, the pathwise uniqueness of strong solutions for SDEs with non-Lipschitz conditions has been studied by many authors (see, for example \cite{Lan2014New, shao2012harnack}). However, to the best of our knowledge, there are only a few papers dealing with the pathwise uniqueness of strong solutions for JSDEs with non-Lipschitz conditions (see \cite{FU2010306, Zenghu2009Strong}). The second task of this paper is to give a new non-lipschitz condition to guarantee the pathwise uniqueness of strong solutions to JSDEs.

On the other hand, the closely related non-confluent property (also known as the non-contact property) of strong solutions for SDEs with the Lipschitz condition has been studied by several authors (see, for example, \cite{emery1981non, uppman1982flot} and the references therein). Moreover, some sufficient conditions are derived for ensuring the non-confluent property of strong solutions for SDEs without jumps with non-lipschitz coefficients in \cite{fang2005study, Lan2014New}. However, the non-confluent property of strong solutions for SDEs with jumps had not been studied until a sufficient condition was established by Xi and Zhu \cite{Xi2017Jump}. The third task of this paper is to give a new sufficient condition for ensuring the non-confluent property of strong solutions for SDEs with jumps.

The rest of this paper is structured as follows. Section 2 presents some necessary preliminaries including assumptions and lemmas. In section 3, we obtain main results concerned with the non-explosion and pathwise uniqueness of strong solutions for JSDEs with super linear growth and non-Lipschitz conditions. Before concluding this paper, the non-confluent property of strong solutions for JSDEs is investigated in Section 4.

\section{Preliminaries}
\paragraph{}
Let $\{p_1(t)\}$ and $\{p_2(t)\}$ be two $\mathcal{F}_t$-Poisson point processes on $U_1$ and $U_2$ with characteristic measures $\nu_1(du)$ and $\nu_2(du)$, respectively, such that $\{B_t\}, \{p_1(t)\}, \{p_2(t)\}$ are independent of each other. Let $N_1(ds,du)$ and $N_2(ds,du)$ be Poisson random measures associated with $\{p_1(t)\}$ and $\{p_2(t)\}$, respectively. Moreover, suppose that $b:\mathbb{R}\rightarrow\mathbb{R}$ and $\sigma:\mathbb{R}\rightarrow\mathbb{R}$ are two continuous functions, $c_1:\mathbb{R}{\times}U_1\rightarrow\mathbb{R}$ and $c_2:\mathbb{R}{\times}U_2\rightarrow\mathbb{R}$ are two Borel functions.

In this paper, we consider the following JSDE:
\begin{equation}
\begin{split}
X(t)=X_0&\mbox{}+\int_{0}^{t}\sigma(X(s))dB_s+\int_{0}^{t}\int_{U_1}c_1(X(s-),u)\widetilde{N_1}(ds,du)\\
&\mbox{}+\int_{0}^{t}b(X(s))ds+\int_{0}^{t}\int_{U_2}c_2(X(s-),u){N_2}(ds,du)\label{eq-JSDE}
\end{split}
\end{equation}
with $\mathbb{E}[|X_0|^2]<\infty$, where
$$\widetilde{N_1}(dt,du)=N_1(dt,du)-\nu(dz)dt
$$
is the compensated Poisson random measure of $N_1(ds,dt)$. Since $X(s)\neq X(s-)$ for at most countably many $s>0$, we know that $\sigma(X(s-))$ and $b(X(s-))$ can be replaced by $\sigma(X(s))$ and $b(X(s))$, respectively.
\begin{defn}
A process ${X(t)}$ is said to be a strong solution of (\ref{eq-JSDE}) if it is $\mathscr{F}_t$-adapted almost surely for every $t\geq 0$, where $\mathscr{F}_t=\sigma(\{B_t\}, \{p_1(t)\}, \{p_2(t)\})$ is the augmented natural filtration generated by $\{B_t\}$, $\{p_1(t)\}$ and $\{p_2(t)\}$.
\end{defn}

\begin{lemma}(\cite{FU2010306})\label{dengjiajie}
Let $U_3$ be a subset of $U_2$ satisfying $\nu_2(U_2\backslash U_3)<\infty$ and consider the following JSDE:
\begin{eqnarray}
X(t)&=&X_0+\int_{0}^{t}\sigma(X(s))dB_s+\int_{0}^{t}\int_{U_1}c_1(X(s-),u)\widetilde{N_1}(ds,du)\nonumber\\
&&\mbox{}+\int_{0}^{t}b(X(s))ds+\int_{0}^{t}\int_{U_3}c_2(X(s-),u){N_2}(ds,du).\label{eq-SDE2}
\end{eqnarray}
Then (\ref{eq-JSDE}) has a strong solution if (\ref{eq-SDE2}) has a strong solution. Moreover, the pathwise uniqueness of strong solutions holds for (\ref{eq-JSDE}) if it holds for (\ref{eq-SDE2}).
\end{lemma}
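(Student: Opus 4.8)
The only way in which (\ref{eq-JSDE}) differs from (\ref{eq-SDE2}) is through the integral against $N_2$ over the removed set, namely the term $\int_{0}^{t}\int_{U_2\backslash U_3}c_2(X(s-),u)N_2(ds,du)$. The plan is to exploit the hypothesis $\nu_2(U_2\backslash U_3)<\infty$, which makes this a \emph{finite-activity} jump term, and to connect the two equations by an interlacing (pathwise restarting) argument. First I would record the key structural fact: since $\nu_2(U_2\backslash U_3)<\infty$, the restriction of the Poisson point process $\{p_2(t)\}$ to $U_2\backslash U_3$ has, almost surely, only finitely many points in each bounded time interval. Listing the associated jump times as $0<\tau_1<\tau_2<\cdots$, these are $\mathscr{F}_t$-stopping times with $\tau_k\uparrow\infty$ a.s., and on each interval $[\tau_{k-1},\tau_k)$ the measure $N_2$ charges only $U_3$. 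Consequently, for each fixed $t$ the extra term collapses to the finite sum $\sum_{k:\tau_k\le t}c_2(X(\tau_k-),u_k)$, where $u_k\in U_2\backslash U_3$ is the mark attached to $\tau_k$.

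For the existence assertion, assume (\ref{eq-SDE2}) admits a strong solution. I would build a solution of (\ref{eq-JSDE}) by interlacing. Setting $\tau_0=0$, let $X$ coincide on $[\tau_0,\tau_1)$ with a strong solution of (\ref{eq-SDE2}) issued from $X_0$; at $\tau_1$ put $X(\tau_1)=X(\tau_1-)+c_2(X(\tau_1-),u_1)$; then on $[\tau_1,\tau_2)$ let $X$ solve (\ref{eq-SDE2}) restarted from the $\mathscr{F}_{\tau_1}$-measurable value $X(\tau_1)$, and continue inductively. Because $b$, $\sigma$, $c_1$, $c_2$ are time-homogeneous and the driving noise $(\{B_t\},\{p_1(t)\},\{p_2(t)\})$ has the strong Markov property, the noise shifted past each $\tau_k$ is independent of $\mathscr{F}_{\tau_k}$ and identically distributed, so every restart yields a genuine strong solution of (\ref{eq-SDE2}) on the next interval. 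Gluing the pieces produces a c\`adl\`ag $\mathscr{F}_t$-adapted process which, by construction, inserts exactly the jumps $c_2(X(\tau_k-),u_k)$ and therefore satisfies (\ref{eq-JSDE}) on $[0,\infty)$.

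The pathwise-uniqueness assertion is the more transparent direction. Assuming pathwise uniqueness for (\ref{eq-SDE2}), let $X$ and $\widetilde{X}$ be two strong solutions of (\ref{eq-JSDE}) driven by the same noise with the same initial value $X_0$. On $[0,\tau_1)$ both satisfy (\ref{eq-SDE2}), so uniqueness there forces $X=\widetilde{X}$ on this interval; passing to left limits gives $X(\tau_1-)=\widetilde{X}(\tau_1-)$, and since both solutions receive the same jump at $\tau_1$ we obtain $X(\tau_1)=\widetilde{X}(\tau_1)$. Repeating the argument on $[\tau_1,\tau_2),[\tau_2,\tau_3),\dots$ and invoking $\tau_k\uparrow\infty$ shows $X=\widetilde{X}$ on all of $[0,\infty)$, as required.

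I expect the main obstacle to be the restarting step in the existence proof. One must verify carefully that a strong solution of (\ref{eq-SDE2}) can be reissued at the stopping times $\tau_k$ from random, $\mathscr{F}_{\tau_k}$-measurable initial data while preserving joint measurability and $\mathscr{F}_t$-adaptedness of the concatenated process. This rests squarely on the strong Markov property of the driving L\'evy-type noise and on the time-homogeneity of the coefficients, together with the elementary fact that only finitely many restarts occur before any finite time, so that no accumulation of jumps from $U_2\backslash U_3$ can obstruct the global construction.
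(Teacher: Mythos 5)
The paper does not prove this lemma itself but quotes it from Fu and Li \cite{FU2010306}, whose proof (Proposition~2.2 there) is precisely the interlacing argument you describe: enumerate the a.s.\ finitely many jump times contributed by $U_2\backslash U_3$ (possible because $\nu_2(U_2\backslash U_3)<\infty$), solve (\ref{eq-SDE2}) between consecutive jump times, insert the jump $c_2(X(\tau_k-),u_k)$ at each $\tau_k$, and run the same partition to transfer pathwise uniqueness interval by interval. Your proposal is correct and takes essentially the same route as the cited source.
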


In this paper,  we need the following assumptions.

\begin{assumption}\label{ass-c}
Suppose that there exists a non-decreasing, continuous and concave function
$\rho:[0,\infty)\rightarrow[0,\infty)$ such that $\rho(x)>0$ for $x>0$
satisfing
\begin{equation}
\int_{0+}\frac{ds}{\rho(s)}=\infty.\label{con:ass}
\end{equation}
\end{assumption}

Clearly, the following functions satisfy (\ref{con:ass}):
\begin{align*}
&\rho(x)=x(x>0);                            &\rho(x)&=-x\ln x(0<x\leq \frac{1}{e});\\
&\rho(x)=x\ln(-\ln x)(0<x\leq \frac{1}{e}); &\rho(x)&=1-x^x(0<x\leq \frac{1}{e}).
\end{align*}

\begin{assumption}\label{ass-b}
Assume that there exists a non-decreasing and continuously differentiable function $\Upsilon:[0,\infty)\rightarrow [1,\infty)$ satisfying
\begin{enumerate}[($\romannumeral1$)]
\item $\lim \limits_{x\rightarrow \infty}\Upsilon(x)=+\infty$;
\item $\int_{0}^{\infty}\frac{ds}{s\Upsilon(s)+1}=+\infty$;
\item $2xb(x)+|\sigma(x)|^2+\int_{U_1}|c_1(x-,u)|^2\nu_1(du)+2\int_{U_3}|c_2(x-,u)|^2\nu_2(du)\leq \mu[x^2\Upsilon(x^2)+1]$
for all $x\in \mathbb{R}$, where $\mu\geq0$ is a fixed constant.
\end{enumerate}
\end{assumption}

Clearly, the following functions satisfy Assumption \ref{ass-b}:
\begin{align*}
&\Upsilon(x)=1(x\geq0); &\Upsilon(x)&=\ln x(x\geq e);
&\Upsilon(x)=\ln x\ln(\ln x)(x\geq e^2).
\end{align*}

\begin{assumption}\label{ass-a}
Suppose that there exists a constant $\delta_0>0$ such that, for any $x,y\in\mathbb{R}$ with $0<|x-y|\leq{\delta_0}$,
\begin{enumerate}[($\romannumeral1$)]
\item $\max\left\{(x-y)(b(x)-b(y)),|\sigma(x)-\sigma(y)|^2\right\}{\leq}{|x-y|^{2-\alpha}}\rho(|x-y|^\alpha)$;
\item $\int_{U_1}\max\{|c_1(x,u)-c_1(y,u)|^\alpha ,|x-y|^{\alpha-1}\cdot |c_1(x,u)-c_1(y,u)|\}\nu_1(du)\leq \rho(|x-y|^\alpha)$;
\item $\int_{U_3}\max\{|c_2(x,u)-c_2(y,u)|^\alpha ,|x-y|^{\alpha-1}\cdot |c_2(x,u)-c_2(y,u)|\}\nu_2(du)\leq \rho(|x-y|^\alpha)$.
\end{enumerate}
Here $0\leq \alpha< +\infty$ is a fixed constant and $\rho$ is defined in Assumption \ref{ass-c}.
\end{assumption}

\begin{remark}
In particular, Assumption \ref{ass-a}  reduces to the Lipschitz case when $\alpha=0$. Thus it is sufficient to consider the situation for $0< \alpha< +\infty$.
\end{remark}

\begin{assumption}\label{as-cor}
Assume that there exist two non-decreasing, continuous functions
$\rho_1, \rho_2:[0,\infty)\rightarrow[0,\infty)$
satisfing:
\[\int_{0+}\frac{ds}{\rho_i(s)}=\infty,\;i=1,2,\]
where $\rho_1$ is concave. In addition, suppose that there exists a constant $\delta_0>0$ such that,  for any $x, y\in\mathbb{R}$ with $0<|x-y|\leq{\delta_0}$,
\begin{enumerate}[($\romannumeral1$)]
\item $(x-y)(b(x)-b(y))+\int_{U_3}|c_2(x,u)-c_2(y,u)|\nu_2(du)\leq {|x-y|}\rho_1(|x-y|)$,
\item $|\sigma(x)-\sigma(y)|^2+\int_{U_1}|c_1(x,u)-c_1(y,u)|^2\nu_1(du)\leq \rho_2(|x-y|)$.

\end{enumerate}
Here $c_1(x,u)$ is non-decreasing for each fixed $u$.
\end{assumption}

\begin{assumption}\label{ass-confluence}
Suppose that
\begin{align}
\nu_i(u\in U: & \; \mbox{there exist}  \; x,y\in\mathbb{R}\;with \;x\neq y \; \mbox{such that} \nonumber \\
& \mbox{} \; |x-y+c_i(x,u)-c_i(y,u)|\leq \delta |x-y|)=0, \; i=1,2, \label{condition nu}
\end{align}
where $\delta>0$ is a fixed constant. In addition, assume that, for any $x,y\in\mathbb{R}$,
\begin{enumerate}[($\romannumeral1$)]
\item $(x-y)(b(x)-b(y)){\leq}{|x-y|^{2+\alpha}}\rho(|x-y|^{-\alpha})$;
\item $|\sigma(x)-\sigma(y)|^2{\leq}{|x-y|^{2+\alpha}}\rho(|x-y|^{-\alpha})$;
\item $\int_{U_i}|c_i(x,u)-c_i(y,u)|\nu_i(du)\leq |x-y|^{1+\alpha}\rho(|x-y|^{-\alpha})$, $i=1,2$.
\end{enumerate}
Here $0\leq \alpha< +\infty$ is a fixed constant and $\rho$ is defined in Assumption \ref{ass-c}.
\end{assumption}
\begin{remark}
If $\alpha=0$, then Assumption \ref{ass-confluence} reduces to the assumption in Corollary 3.3 in \cite{Xi2017Jump}.
\end{remark}

In order to obtain our main results, we also need the following lemmas.

\begin{lemma} (\cite{?2007Applied})\label{l2.1}
Suppose that $X(t)\in\mathbb{R}$ is an It\^{o}-L\'{e}vy process of the following form:
$$
dX(t)=b(t,\omega)dt+\sigma(t,\omega)dB_t+\int_{\mathbb{R}}\gamma(t,u,\omega)\overline{N}(dt,du),
$$
where
\begin{equation}
\overline{N}(x)=
\begin{cases}
N(dt,du)-\nu(du)dt, &\mbox{if $|u|<R$};\\
N(dt,du), &\mbox{if $|u|\geq R$}
\end{cases}
\end{equation}
for some $R\in [0,+\infty)$. Let $f\in C^2(\mathbb{R}^2)$ and define $Y(t)=f(t,X(t))$. Then $Y(t)$ is again an It$\widehat{o}$-L$\acute{e}$vy process and
\begin{align*}
dY(t)&=\frac{\partial f}{\partial t}(t,X(t))dt+\frac{\partial f}{\partial x}(t,X(t))\left[b(t,\omega)dt+\sigma(t,\omega)dB_t\right]+\frac{1}{2}\sigma^2(t,X(t))\frac{\partial^2 f}{\partial x^2}(t,X(t-))\\
&\quad \mbox{}+\int_{|z|<\mathbb{R}}\bigg\{f(t,X(t-)+\gamma(t,u))-f(t,X(t-))-\frac{\partial f}{\partial x}(t,X(t-))\gamma(t,u)\bigg\}\nu(du)dt\\
&\quad \mbox{}+\int_{\mathbb{R}}\bigg\{f(t,X(t-)+\gamma(t,u))-f(t,X(t-))\bigg\}\overline{N}(dt,du).
\end{align*}
\end{lemma}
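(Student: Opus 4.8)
The plan is to establish the formula by the interlacing (stratification) technique for the jumps, combined with a truncation-and-limit argument for the small-jump part, so that everything is reduced to the classical It\^o formula for continuous semimartingales. The first step is to rewrite the dynamics so that every jump is uncompensated. Since $\overline N$ compensates only the small jumps ($|u|<R$), I write $\int_{|u|<R}\gamma\,\overline N(dt,du)=\int_{|u|<R}\gamma\,N(dt,du)-\int_{|u|<R}\gamma\,\nu(du)\,dt$, so that $dX(t)=\widetilde b(t,\omega)\,dt+\sigma(t,\omega)\,dB_t+\int_{\mathbb R}\gamma(t,u,\omega)\,N(dt,du)$ with the modified drift $\widetilde b:=b-\int_{|u|<R}\gamma\,\nu(du)$. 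In this form $X$ is a continuous It\^o process plus a pure-jump term driven by the uncompensated measure $N$, and it suffices to prove the formula for this representation and then regroup the terms.

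Second, I treat the finite-activity case. For $\varepsilon\in(0,R)$ let $N^\varepsilon$ retain only the jumps with $|u|>\varepsilon$; since $\nu(\{|u|>\varepsilon\})<\infty$ and the large jumps $|u|\ge R$ are already of finite activity, the truncated process $X^\varepsilon$ has almost surely finitely many jumps on every bounded interval. Between consecutive jump times $X^\varepsilon$ is a continuous It\^o process, so the classical It\^o formula applies on each inter-jump interval and contributes the terms $\frac{\partial f}{\partial t}+\frac{\partial f}{\partial x}\widetilde b+\frac12\sigma^2\frac{\partial^2 f}{\partial x^2}$ in $ds$ together with $\frac{\partial f}{\partial x}\sigma\,dB_s$; at each jump time we add the exact increment $f(s,X^\varepsilon(s-)+\gamma)-f(s,X^\varepsilon(s-))$, and these increments assemble into $\int_0^t\int_{|u|>\varepsilon}\{f(s,X(s-)+\gamma)-f(s,X(s-))\}\,N(ds,du)$. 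Telescoping over the finitely many subintervals yields the formula for $X^\varepsilon$.

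The main obstacle is the passage $\varepsilon\to0$ for the small jumps. I localize by a stopping time that keeps $X$ inside a compact set $K$, on which $f$, $\frac{\partial f}{\partial x}$ and $\frac{\partial^2 f}{\partial x^2}$ are bounded and Taylor's theorem gives $|f(x+\gamma)-f(x)-\frac{\partial f}{\partial x}(x)\gamma|\le C_K\,\gamma^2$. Splitting the jump integral over $\{\varepsilon<|u|<R\}$ into a compensated martingale part against $\widetilde N$ plus its compensator, and combining the compensator with the term $-\frac{\partial f}{\partial x}\int_{|u|<R}\gamma\,\nu(du)$ hidden in $\widetilde b$, produces exactly $\int_0^t\int_{|u|<R}\{f(s,X(s-)+\gamma)-f(s,X(s-))-\frac{\partial f}{\partial x}\gamma\}\,\nu(du)\,ds$. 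The quadratic Taylor bound together with the integrability $\int_{|u|<R}\gamma^2\,\nu(du)<\infty$ (which is part of the standing assumption that $X$ is an It\^o--L\'evy process) shows this compensator converges as $\varepsilon\to0$, while the It\^o isometry gives $L^2$-convergence of the martingale part; the continuous terms are unaffected. Removing the localization by letting $K\uparrow\mathbb R$ yields the formula for $X$.

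Finally, I regroup the jump contributions: the compensated small-jump martingale $\int_{|u|<R}\{\cdots\}\,\widetilde N$ recombines with the uncompensated large-jump integral $\int_{|u|\ge R}\{\cdots\}\,N$ into the single integral $\int_{\mathbb R}\{f(s,X(s-)+\gamma)-f(s,X(s-))\}\,\overline N(ds,du)$ by the definition of $\overline N$, while the $-\frac{\partial f}{\partial x}\gamma$ correction remains inside the $\nu(du)\,ds$ compensator over $|u|<R$. This reproduces the stated identity exactly.
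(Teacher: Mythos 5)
This lemma is not proved in the paper at all: it is the one-dimensional It\^o--L\'evy formula imported verbatim (with some typographical slips) from \O ksendal and Sulem's \emph{Applied Stochastic Control of Jump Diffusions}, so there is no in-paper argument to compare yours against. Judged on its own terms, your proposal is the standard textbook proof --- interlacing of the finitely many jumps of size $|u|>\varepsilon$ with the classical It\^o formula on the inter-jump intervals, followed by a localized $L^2$ passage to the limit $\varepsilon\to 0$ using the second-order Taylor bound and $\int_{|u|<R}\gamma^2\,\nu(du)<\infty$ --- and the overall architecture is sound and complete in outline.

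One step needs repair. In your first step you split $\int_{|u|<R}\gamma\,\overline N(dt,du)$ into $\int_{|u|<R}\gamma\,N(dt,du)-\bigl(\int_{|u|<R}\gamma\,\nu(du)\bigr)dt$ and absorb the second piece into a modified drift $\widetilde b$. This decomposition is not available in general: the standing hypothesis on an It\^o--L\'evy process is only square-integrability of $\gamma$ against $\nu$ near the origin, so $\int_{|u|<R}|\gamma|\,\nu(du)$ may be infinite and both pieces may fail to exist separately --- that is precisely why the small jumps are compensated in the first place. The fix is to reverse the order of operations: truncate first, i.e.\ work with $\widetilde b_\varepsilon:=b-\int_{\varepsilon<|u|<R}\gamma\,\nu(du)$, which is finite by Cauchy--Schwarz since $\nu(\varepsilon<|u|<R)<\infty$, prove the formula for $X^\varepsilon$, and only then let $\varepsilon\to0$. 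Your own step three already does the right thing at the limit --- it recombines the compensator with the $-\frac{\partial f}{\partial x}\gamma$ correction into the integrand $f(x+\gamma)-f(x)-\frac{\partial f}{\partial x}(x)\gamma$, which \emph{is} $\nu$-integrable by the quadratic Taylor bound --- so the error is purely one of ordering and does not affect the final identity; but as written the intermediate object $\widetilde b$ is ill-defined.
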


In the sequel, for any $f\in C^n(\mathbb{R})$, we will replace $\frac{\partial ^n}{\partial^n x}f(x)$ by $D^{(n)}f(x)$ for convenience.

\begin{lemma}\label{th-bihari}
Let $u(t)$ and $g(t)$ be non-negative continuous functions, and $f(t)$ a non-negative continuously differentiable and non-decreasing function for all $t\ge 0$.   Furthermore, suppose that $\rho: [0, +\infty)\to [0,+\infty)$ is a non-negative and non-decreasing continuous function with
$$
\rho(t)=0 \Longleftrightarrow t=0 \; \mbox{and} \; \int_{0+}\frac{ds}{\rho(s)}=\infty.
$$
Then the inequality
$$
u(t){\leq} f(t)+\int_{0}^{t}g(s)\rho(u(s))ds
$$
implies the inequality
$$
u(t){\leq} \Omega^{-1}\left[\Omega(f(t))+\int_{0}^{t}g(s)ds\right],
$$
where
$$
\Omega(t)=\int_{0}^{t}\frac{ds}{\rho(s)}, \quad \forall t>0.
$$
Moreover, if $f(t)=0$ and $|g(t)|<+\infty$, then $u(t)=0$.
\end{lemma}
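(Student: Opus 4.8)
The plan is to recognize Lemma \ref{th-bihari} as a Bihari-type integral inequality and to prove it by reducing to the classical constant-threshold version. Since the displayed definition $\Omega(t)=\int_0^t ds/\rho(s)$ diverges at the lower endpoint under (\ref{con:ass}), I would first fix an arbitrary reference point $c>0$ and read $\Omega$ as $\Omega(t)=\int_c^t ds/\rho(s)$; one checks that the asserted right-hand side $\Omega^{-1}[\Omega(f(t))+\int_0^t g(s)\,ds]$ is independent of the choice of $c$ (a change of $c$ shifts $\Omega$ by an additive constant that cancels), so this is harmless. The function $\Omega$ is strictly increasing on $(0,\infty)$ because $1/\rho>0$ there, hence invertible, and $\Omega(s)\to-\infty$ as $s\to0^+$ \emph{precisely} because $\int_{0+}ds/\rho(s)=\infty$; I will use this last fact crucially in the final step.

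Next I would remove the non-constant threshold by freezing the horizon. Fix $T>0$. Since $f$ is non-decreasing, on $[0,T]$ the hypothesis yields $u(t)\le f(T)+\int_0^t g(s)\rho(u(s))\,ds$, which has the \emph{constant} threshold $f(T)$. It therefore suffices to prove the constant-threshold statement on $[0,T]$, evaluate it at the endpoint $t=T$, and then let $T$ range over $(0,\infty)$; renaming $T$ as $t$ recovers the claimed bound with $f(T)$ replaced by $f(t)$.

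For the constant-threshold case I would introduce, for each $\epsilon>0$, the auxiliary function
$$V_\epsilon(t)=f(T)+\epsilon+\int_0^t g(s)\rho(u(s))\,ds,\qquad t\in[0,T],$$
which is $C^1$ (as $g,u,\rho$ are continuous), satisfies $V_\epsilon(0)=f(T)+\epsilon>0$, and dominates $u$, i.e. $u(t)\le V_\epsilon(t)$. Monotonicity of $\rho$ then gives $\rho(u(t))\le\rho(V_\epsilon(t))$, so $V_\epsilon'(t)=g(t)\rho(u(t))\le g(t)\rho(V_\epsilon(t))$; because $V_\epsilon(t)\ge f(T)+\epsilon>0$ keeps $\rho(V_\epsilon(t))>0$, I may divide and integrate, using the chain rule $\frac{d}{ds}\Omega(V_\epsilon(s))=V_\epsilon'(s)/\rho(V_\epsilon(s))$, to obtain
$$\Omega(V_\epsilon(t))-\Omega(f(T)+\epsilon)=\int_0^t\frac{V_\epsilon'(s)}{\rho(V_\epsilon(s))}\,ds\le\int_0^t g(s)\,ds.$$
Applying the increasing map $\Omega^{-1}$ and $u\le V_\epsilon$ gives $u(t)\le\Omega^{-1}\!\left[\Omega(f(T)+\epsilon)+\int_0^t g(s)\,ds\right]$. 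I expect the main obstacle to be exactly this step: the degeneracy $\rho(0)=0$ forbids dividing by $\rho(V)$ when the threshold vanishes, and the $\epsilon$-regularization is what circumvents it while keeping $\rho(V_\epsilon)$ strictly positive; here the hypotheses $\rho(s)>0$ for $s>0$ and the monotonicity of $\rho$ are used.

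Finally I would pass to the limit $\epsilon\downarrow0$. If $f(T)>0$, continuity of $\Omega$ at $f(T)$ gives $u(t)\le\Omega^{-1}\!\left[\Omega(f(T))+\int_0^t g(s)\,ds\right]$; evaluating at $t=T$ and varying $T$ proves the first assertion. For the ``moreover'' part, where $f\equiv0$, the same bound reads $u(t)\le\Omega^{-1}\!\left[\Omega(\epsilon)+\int_0^t g(s)\,ds\right]$ for every $\epsilon>0$. Since $\Omega(\epsilon)\to-\infty$ as $\epsilon\to0^+$ by (\ref{con:ass}), while $\int_0^t g(s)\,ds$ remains finite (as $g$ is continuous and $|g|<\infty$), the argument of $\Omega^{-1}$ tends to $-\infty$, so $\Omega^{-1}$ of it tends to $0$. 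Combined with $u\ge0$ this forces $u(t)=0$, which completes the proof.
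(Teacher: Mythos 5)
Your proof is correct, but it takes a genuinely different route from the paper's. The paper proves the bound directly for non-constant $f$ by setting $v(t)=f(t)+\int_0^t g(s)\rho(u(s))\,ds$ and showing that $\phi(t)=\Omega(v(t))-\Omega(f(t))-\int_0^t g(s)\,ds$ has non-positive derivative (using $f'\ge 0$, $f\le v$, $u\le v$ and the monotonicity of $\rho$), whence $\phi(t)\le\phi(0)=0$; the ``moreover'' part is then treated separately via $\Psi(t)=\int_1^t ds/\rho(s)$ with $\Psi(0)=-\infty$. You instead freeze the horizon to reduce to a constant threshold $f(T)$ and insert an $\epsilon$-regularization before dividing by $\rho(V_\epsilon)$. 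Your route buys three things: (i) it explicitly repairs the statement's normalization $\Omega(t)=\int_0^t ds/\rho(s)$, which is identically $+\infty$ under (\ref{con:ass}); anchoring at $c>0$ and checking independence of $c$ is exactly right, and the paper only adopts this fix silently through $\Psi=\int_1^t$; (ii) it avoids dividing by $\rho(f(t))$, which in the paper's computation of $\phi'$ is zero whenever $f(t)=0$, and it replaces the paper's final display for the $f\equiv 0$ case --- which manipulates $\Psi(0)=-\infty$ as a finite quantity and contains a slip ($\Psi(u)/\Psi(v)$ where $\rho(u)/\rho(v)$ is meant) --- by a clean limit $\epsilon\downarrow 0$; (iii) it derives both assertions of the lemma from a single inequality. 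The paper's argument is shorter when $f>0$ and handles the time-dependent threshold without the horizon-freezing step. Both proofs share the standard unstated caveat that $\Omega^{-1}$ may only be applied where its argument lies in the range of $\Omega$; this is harmless here since what is used downstream is the inequality $\Omega(v(t))\le\Omega(f(t))+\int_0^t g(s)\,ds$ itself.
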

\begin{proof}
Let
$$
v(t)=f(t)+\int_{0}^{t}g(s)\rho(u(s))ds
$$
and
$$
\phi(t)=\Omega(v(t))-\Omega(f(t))-\int_{0}^{t}g(s)ds.
$$
 Then $\max\{{u(t),f(t)}\}\leq v(t)$. By direct computations, we have
\begin{align*}
\phi'(t)&\mbox{}=\frac{f'(t)+g(t)\rho(u(t))}{\rho(v(t))}-\frac{f'(t)}{\rho(f(t))}-g(t)\\
&\mbox{}=\frac{\rho(f(t))-\rho(v(t))}{\rho(f(t))\cdot\rho(v(t))}f'(t)+g(t)\frac{\rho(u(t))-\rho(v(t))}{\rho(v(t))}\\
&\mbox{} \leq 0.
\end{align*}
This shows that $\phi(t)$ is non-increasing and
$$
\phi(t)=\Omega\left(f(t)+\int_{0}^{t}g(s)\rho(u(s))ds\right)-\Omega(f(t))-\int_{0}^{t}g(s)ds{\leq}\phi(0)=0.
$$
Moreover, since $\Omega(t)$ is increasing, one has
$$
\Omega(u(t)){\leq}\Omega\left(f(t)+\int_{0}^{t}g(s)\rho(u(s))ds\right){\leq}\Omega(f(t))+\int_{0}^{t}g(s)ds
$$
and so
$$
u(t){\leq} \Omega^{-1}\left(\Omega(f(t))+\int_{0}^{t}g(s)ds\right).
$$

On the other hand, define $\Psi(t)=\int_{1}^{t}\frac{ds}{\rho(s)}$. Then $\Psi(t)$ is increasing and satisfies $\Psi(0)=-\infty$ since $\int_{0+}\frac{ds}{\rho(s)}=\infty$. Letting $f(t)=0$, it follows that
$$
\Psi(u(t))\leq \Psi(v(t))=G(0)+\int_{0}^{t}\Psi'(v(s))v'(s)ds=\Psi(0)+\int_{0}^{t}\frac{\Psi(u(s))}{\Psi(v(s))}g(s)ds\leq \Psi(0)+\int_{0}^{t}g(s)ds.
$$
Since $\Psi(0)=-\infty$ and $\int_{0}^{t}g(s)ds<+\infty$, we have $\Psi(u(t))=-\infty$ and $u(t)=0$  consequently.
\end{proof}

\begin{remark}
If $f(t)=k>0$ is a constant function, then Lemma \ref{th-bihari} reduces to the corresponding result in \cite{bihari1956generalization}.
\end{remark}

\section{Non-explosion and Pathwise Uniqueness}
\paragraph{}

\begin{thm}\label{thm-explode}
Under Assumption \ref{ass-b}, the solutions for JSDE (\ref{eq-JSDE}) do not explode in finite time.
\end{thm}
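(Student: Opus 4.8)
The plan is to run a Khasminskii-type Lyapunov argument driven by the integral condition in Assumption \ref{ass-b}(ii). For each $n\in\mathbb{N}$ I would introduce the exit times $\tau_n=\inf\{t\ge0:|X(t)|\ge n\}$ and observe that non-explosion is precisely the statement $\tau_n\to\infty$ almost surely. Since $\nu_2(U_2\setminus U_3)<\infty$, the $N_2$-jumps outside $U_3$ occur at a finite rate and each has finite size, so they cannot by themselves produce an explosion; between consecutive such jumps the process solves \eqref{eq-SDE2}, so, in the spirit of Lemma \ref{dengjiajie}, it suffices to prove non-explosion for \eqref{eq-SDE2}, whose coefficients are exactly the ones controlled in Assumption \ref{ass-b}(iii).

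The key device is the Lyapunov function $V(x)=G(1+x^2)$ with $G(r)=\int_0^r \frac{ds}{s\Upsilon(s)+1}$. Here $G$ is increasing and concave, $G'(r)=\frac{1}{r\Upsilon(r)+1}\in(0,1]$, and $G(r)\to\infty$ as $r\to\infty$ by Assumption \ref{ass-b}(ii). Consequently $0\le V(x)\le 1+x^2$, so $\mathbb{E}[V(X_0)]<\infty$ follows from $\mathbb{E}[|X_0|^2]<\infty$, while Assumption \ref{ass-b}(i)--(ii) guarantee $V(x)\to\infty$ as $|x|\to\infty$; these are exactly the two features a Lyapunov function needs.

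Next I would apply the It\^o formula of Lemma \ref{l2.1} to $V(X(t\wedge\tau_n))$ and use the concavity of $G$ to dominate every second-order (diffusion and jump) contribution by a first-order one: concavity gives $V(x+c)-V(x)\le G'(1+x^2)[(x+c)^2-x^2]$, hence $V(x+c_1)-V(x)-c_1V'(x)\le G'(1+x^2)c_1^2$ for the compensated $U_1$-integral and $\tfrac12\sigma^2V''\le G'(1+x^2)\sigma^2$ for the diffusion term. Collecting the drift one obtains $\mathcal{L}V(x)\le G'(1+x^2)\,\mathcal{L}(x^2)$, where $\mathcal{L}(x^2)=2xb(x)+\sigma^2(x)+\int_{U_1}c_1^2\,\nu_1(du)+\int_{U_3}(2xc_2+c_2^2)\,\nu_2(du)$. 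Estimating the cross term by $2xc_2\le x^2+c_2^2$ and invoking Assumption \ref{ass-b}(iii) yields $\mathcal{L}(x^2)\le C(x^2\Upsilon(x^2)+1)$; since $\Upsilon$ is non-decreasing, $x^2\Upsilon(x^2)+1\le(1+x^2)\Upsilon(1+x^2)+1=1/G'(1+x^2)$, so that $\mathcal{L}V\le C$ is uniformly bounded. Taking expectations (the $dB$- and $\widetilde{N}$-martingale parts vanish because stopping at $\tau_n$ keeps the integrands bounded) gives $\mathbb{E}[V(X(t\wedge\tau_n))]\le \mathbb{E}[V(X_0)]+Ct$, uniformly in $n$; should one only reach $\mathcal{L}V\le C(1+V)$, Lemma \ref{th-bihari} (or Gr\"onwall) closes the estimate equally well.

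Finally, since $V(X(\tau_n))\ge G(1+n^2)$ on $\{\tau_n\le T\}$, Markov's inequality gives $\mathbb{P}(\tau_n\le T)\le \big(\mathbb{E}[V(X_0)]+CT\big)/G(1+n^2)\to0$ because $G(1+n^2)\to\infty$; as $T$ is arbitrary this forces $\tau_n\to\infty$ almost surely. I expect the main obstacle to be the jump-integral estimate, specifically reconciling the non-compensated cross term $\int_{U_3}2xc_2\,\nu_2(du)$ with the bound $2\int_{U_3}c_2^2\,\nu_2(du)$ furnished by Assumption \ref{ass-b}(iii): this is where the concavity of $G$, the monotonicity of $\Upsilon$, and the finite activity of the $N_2$-part (so that $\nu_2(U_3)<\infty$ and the compensator is well defined) must be used in concert.
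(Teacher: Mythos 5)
Your argument is correct and follows essentially the same route as the paper: a concave Lyapunov function built from $\int_0^{x}\frac{ds}{s\Upsilon(s)+1}$, the concavity inequality $\phi(y)\le\phi(x)+(y-x)\phi'(x)$ to dominate the diffusion and jump terms, the bound $2xc_2\le x^2+c_2^2$ together with $\nu_2(U_3)\le M$ for the uncompensated part, and a stopping-time/Markov conclusion. The only difference is cosmetic: the paper uses $\exp\bigl(\int_0^{x^2}\frac{ds}{s\Upsilon(s)+1}\bigr)$ so that $\phi'(x)(x\Upsilon(x)+1)=\phi(x)$ and closes with Gronwall, whereas your choice $G(1+x^2)$ gives $G'(r)(r\Upsilon(r)+1)=1$ and hence the slightly cleaner additive bound $\mathbb{E}[V(X_0)]+Ct$.
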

\begin{proof}
Define
$$
\phi(x)=\exp\left\{\int_{0}^{x}\frac{ds}{s\Upsilon(s)+1}\right\}, \quad x\geq 0.
$$
By simple computations, we have
$$
\phi'(x)=\phi(x)\frac{1}{x\Upsilon(x)+1}\geq0, \quad \phi''(x)=\phi(x)\frac{1-\Upsilon(x)-x\Upsilon'(x)}{(x\Upsilon(x)+1)^2}\leq0.
$$
Clearly, $\phi(x)$ is a concave function with $\phi(x)\rightarrow\infty$ as $x\rightarrow\infty$. Moreover,
$$
D^{(1)}\phi(x^2)=\phi'(x^2)\cdot 2x, \quad D^{(2)}\phi(x^2)=2\phi'(x^2)+\phi''(x^2)\cdot 4x^2.
$$
Since $\phi''(x)\leq0$, we know that
$$\phi(y)\leq \phi(x)+(y-x)\phi'(x), \quad \forall x,y\in[0,\infty).$$
It follows that
\begin{align*}
&\quad \mbox{} \phi\left((X(s-)+c_1\left(X(s-),u\right))^2\right)-\phi(X^2(s-))-D^{(1)}\phi\left((X^2(s-))\cdot c_1(X(s-),u)\right)\\
&\mbox{}\leq \phi'(X^2(s-))\left[2X(s-)c_1(X(s-))+c_1^2\left(X(s-),u\right)\right]-\phi'(X^2(s-)).2X(s-)c_1\left(X(s-),u\right)\\
&\mbox{}=\phi'(X^2(s-))c_1^2(X(s-),u).
\end{align*}
Let
$$\tau_R:=\inf\left\{t\geq0:\max\left\{{|\widetilde{X}(t)|}\ , {|X(t)|}\right\}\geq{R}\right\}.$$  Applying Lemma \ref{l2.1}, one has
\begin{align*}
\mathbb{E}\left[\phi(X^2(t\wedge{\tau_R}))\right]&\mbox{}=\mathbb{E}[\phi(|X_0|^2)]+\mathbb{E}\left[\int_{0}^{t\wedge{\tau_R}}D^{(1)}\phi(X^2(s))b(X(s))+\frac{1}{2}\sigma^2(X(s)).D^{(2)}\phi(X^2(s))ds\right]\\
&\quad \mbox{}+\mathbb{E}\bigg[\int_{0}^{t\wedge{\tau_R}}\int_{U_1}\phi(|X(s-)+c_1(X(s-),u)|^2)-\phi(X^2(s-))\\
&\quad  \mbox{} -D^{(1)}\phi(|X^2(s-)|)\cdot c_1(X(s-),u)\nu_1(du)ds\bigg]\\
&\quad \mbox{} +\mathbb{E}\left[\int_{0}^{t\wedge{\tau_R}}\int_{U_3}\phi\left((X(s-)+c_2(X(s-),u))^2\right)-\phi((X^2(s-))\nu_2(du)ds\right]\\
&\mbox{}\leq \mathbb{E}[\phi(|X_0)|^2]+\mathbb{E}\left[\int_{0}^{t\wedge{\tau_R}}\phi'(X^2(s))\cdot \big[2X(s)b(X(s))+\sigma^2(X(s))\big]\right.\\
&\quad \left. \mbox{} +2(X^2(s))\phi''(X^2(s))\sigma^2(X(s))ds\right.\\
&\quad \left. \mbox{} +\int_{U_1}\phi'(X^2(s-))\cdot|c_1(X(s-),u)|^2\nu_1(du)ds\right]\\
&\quad \mbox{}+\mathbb{E}\left[\int_{0}^{t\wedge{\tau_R}}\int_{U_3}\phi'(X^2(s-))\cdot(2|c_2(X(s-),u)|^2+X^2(s-))\nu_2(du)ds\right].
\end{align*}
Since $\phi''(x)\leq0$ and $\int_{U_3}1\nu_2(du)\leq \int_{U_2}1\nu_2(du)\leq M$, by Assumption \ref{ass-b}, we have
\begin{align*}
\mathbb{E}\left[\phi(X^2(t\wedge{\tau_R}))\right]\leq &\mbox{} \phi\left(\mathbb{E}[|X_0|^2]\right)+\mathbb{E}\left[\int_{0}^{t\wedge{\tau_R}}\phi'(X^2(s))\cdot (M+1)\mu[X^2(s)\Upsilon(X^2(s))+1]ds\right]\\
=&\mbox{} \phi\left(\mathbb{E}[|X_0|^2]\right)+(M+1)\mu \int_{0}^{t}\mathbb{E}[\phi(X^2(s\wedge{\tau_R}))]ds.
\end{align*}
Thus, it follows from Gronwall's inequality that
$$
\mathbb{E}\left[\phi(X^2(t\wedge{\tau_R}))\right]\leq \phi\left(\mathbb{E}[|X_0|^2]\right)\cdot e^{\mu(M+1)t}.
$$
Letting $|X(t\wedge{\tau_R})|\rightarrow \infty$, we have $t\rightarrow\infty$ as $\mathbb{E}[|X_0|^2]<\infty$. Therefore,  the solution has no finite explosion time.
\end{proof}

\begin{thm}\label{th-unique}
Under Assumptions \ref{ass-b} and \ref{ass-a}, the pathwise uniqueness of strong solutions for JSDE (\ref{eq-SDE2}) holds.
\end{thm}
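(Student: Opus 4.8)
The plan is to transpose the Yamada--Watanabe technique to the present jump setting, using $|z|^{\alpha}$ as the test function. Let $X_1$ and $X_2$ be two strong solutions of (\ref{eq-SDE2}) driven by the same Brownian motion and Poisson measures and started from the same $X_0$, and set $Z(t)=X_1(t)-X_2(t)$, so $Z(0)=0$. By Theorem \ref{thm-explode} (which relies on Assumption \ref{ass-b}) neither solution explodes, so the stopping times $\tau_R=\inf\{t\ge0:\max(|X_1(t)|,|X_2(t)|)\ge R\}$ increase to $\infty$ almost surely, and I would work with the stopped process and send $R\to\infty$ at the end. Since Assumption \ref{ass-a} is only local, I also set $\beta_{\delta_0}=\inf\{t\ge0:|Z(t)|\ge\delta_0\}$ and observe that whenever $Z(s-)=0$ we have $X_1(s-)=X_2(s-)$, whence every jump increment $c_i(X_1(s-),u)-c_i(X_2(s-),u)$ vanishes; thus $Z$ cannot leave the origin through a jump, and it will suffice to prove $Z\equiv0$ on $[0,\tau_R\wedge\beta_{\delta_0})$, which then forces $\beta_{\delta_0}=\infty$.

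First I would construct, using $\int_{0+}\frac{ds}{\rho(s)}=\infty$, a sequence of even $C^2$ functions $\psi_n$ converging pointwise to $z\mapsto|z|^{\alpha}$ whose first derivatives approximate $\alpha|z|^{\alpha-1}\mathrm{sgn}(z)$ and whose second derivatives approximate $\alpha(\alpha-1)|z|^{\alpha-2}$ away from the origin and are suitably regularized near it; the key requirement is that $\psi_n''(z)\,|z|^{2-\alpha}$ stay uniformly bounded. Applying the It\^{o} formula of Lemma \ref{l2.1} to $\psi_n(Z(t\wedge\tau_R\wedge\beta_{\delta_0}))$ and taking expectations kills the two compensated martingale parts and leaves a drift term $\psi_n'(Z)(b(X_1)-b(X_2))$, a diffusion term $\frac{1}{2}\psi_n''(Z)|\sigma(X_1)-\sigma(X_2)|^2$, a compensated jump term associated with $c_1$, and a pure jump term associated with $c_2$.

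Each term is then estimated through Assumption \ref{ass-a}. For the drift, $\mathrm{sgn}(Z)(b(X_1)-b(X_2))=|Z|^{-1}(X_1-X_2)(b(X_1)-b(X_2))\le|Z|^{1-\alpha}\rho(|Z|^{\alpha})$ by part (i), so $\psi_n'(Z)(b(X_1)-b(X_2))\le C\rho(|Z|^{\alpha})$; here the exponent $2-\alpha$ is chosen precisely so that the powers cancel. For the diffusion, part (i) gives $|\sigma(X_1)-\sigma(X_2)|^2\le|Z|^{2-\alpha}\rho(|Z|^{\alpha})$, and the weight $|Z|^{2-\alpha}$ exactly offsets the $|z|^{\alpha-2}$ singularity of $\psi_n''$, leaving a contribution dominated by a constant multiple of $\rho(|Z|^{\alpha})$ (and even discardable when $0<\alpha<1$, where the relevant curvature is of the favourable sign). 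For the two jump terms I would invoke the elementary inequalities $\big||a+h|^{\alpha}-|a|^{\alpha}-\alpha|a|^{\alpha-1}\mathrm{sgn}(a)\,h\big|\le C_{\alpha}\max\{|h|^{\alpha},|a|^{\alpha-1}|h|\}$ and $\big||a+h|^{\alpha}-|a|^{\alpha}\big|\le C_{\alpha}\max\{|h|^{\alpha},|a|^{\alpha-1}|h|\}$ with $a=Z$ and $h=c_i(X_1,u)-c_i(X_2,u)$; integrating against $\nu_1$ and $\nu_2$ and using parts (ii) and (iii) bounds both jump contributions by constant multiples of $\rho(|Z|^{\alpha})$.

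Letting $n\to\infty$ (Fatou's lemma on the left, the uniform bounds above and dominated convergence on the right) yields $\mathbb{E}|Z(t\wedge\tau_R\wedge\beta_{\delta_0})|^{\alpha}\le C\int_0^t\mathbb{E}\big[\rho(|Z(s\wedge\tau_R\wedge\beta_{\delta_0})|^{\alpha})\big]\,ds$, and the concavity of $\rho$ together with Jensen's inequality turns this into $u(t)\le C\int_0^t\rho(u(s))\,ds$ for $u(t)=\mathbb{E}|Z(t\wedge\tau_R\wedge\beta_{\delta_0})|^{\alpha}$, which is finite by the localization. The $f\equiv0$ case of Lemma \ref{th-bihari} then forces $u\equiv0$, hence $Z\equiv0$ up to the stopping time; by the jump observation above $\beta_{\delta_0}=\infty$, and letting $R\to\infty$ gives pathwise uniqueness for (\ref{eq-SDE2}). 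I expect the main obstacle to be the construction of the approximations $\psi_n$: one must produce genuinely $C^2$ functions converging to $|z|^{\alpha}$ while keeping $\psi_n''(z)|z|^{2-\alpha}$ under control through the origin, and treat separately the cases $0<\alpha<1$ and $\alpha\ge1$, in which the convexity of $|z|^{\alpha}$ and the signs of the Taylor remainders in the jump estimates differ.
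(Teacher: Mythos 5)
Your plan is essentially the paper's own proof: a Yamada--Watanabe regularization of $|z|^{\alpha}$ (the paper realizes it as $\psi_n(|r|^{\alpha})$ with $\psi_n$ approximating $|r|$ and $\psi_n''\le \tfrac{2}{n\rho}I_{(a_n,a_{n-1})}$), localization by $\tau_R$ and the exit time from $\{|Z|<\delta_0\}$, term-by-term estimates that match Assumption \ref{ass-a} exactly as you describe (including the split of the jump remainder into $|h|^{\alpha}$ and $|a|^{\alpha-1}|h|$ pieces and the separate treatment of $0<\alpha\le1$ and $\alpha>1$), and finally the $f\equiv0$ case of Lemma \ref{th-bihari}. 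The only cosmetic differences are that the paper removes the $\delta_0$-localization by a Chebyshev bound $\delta_0^{\alpha}\mathbb{P}\{S_{\delta_0}\le t\}\le\mathbb{E}[|\Delta_{t\wedge S_{\delta_0}}|^{\alpha}]=0$ rather than your no-jump-from-zero observation, and applies Jensen's inequality before invoking Bihari.
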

\begin{proof}
By the assumptions imposed on $\rho$, we can find a strictly decreasing sequence $\{a_n\}\subset(0,1]$ such that
\begin{enumerate}[($\romannumeral1$)]
\item $a_0=1$;
\item $\lim\limits_{n\rightarrow\infty} a_n=0$;
\item $\int_{a_n}^{a_{n-1}}\frac{1}{\rho{(r)}}dr=n$ for every $n\geq1$.
\end{enumerate}
Clearly, for each $n\geq1$, there exists a continuous function $\rho_n$ on $\mathbb{R}$ such that
\begin{enumerate}[($\romannumeral1$)]
\item $\rho_n(r)$ has a supported set $(a_n,a_{n-1})$;
\item $0\leq{\rho_n{(r)}}\leq \frac{2}{n\rho(r)}$ for every $r>0$;
\item $\int_{a_n}^{a_{n-1}}\rho_n(r)dr=1$.
\end{enumerate}
Now we consider the following sequence of functions:
$$
\psi_n(r)=\int_{0}^{|r|}\int_{0}^{v}\rho_n(u)dudv,\;r\in\mathbb{R},\;n\geq1.
$$
Clearly, $\psi_n$ is even and twice continuously differentiable (except at $r=0$) with the following properties:
\begin{enumerate}[($\romannumeral1$)]
 \item $|\psi'_n(r)|\leq1, \quad r\neq0$;
 \item $\lim\limits_{n\rightarrow\infty}\psi_n(r)=|r|, \quad r\neq0$;
 \item $\psi''_n(r)\leq{\frac{2}{n\rho(r)}I_{(a_n,a_{n-1})}(r)}, \quad r\neq0$.
 \end{enumerate}
Furthermore, for each $r>0$, the sequence $\{\psi_n(r)\}_{n\geq1}$ is non-decreasing. Note that for each $n\in\mathbb{N}$, $\psi_n$, $\psi'_n$ and $\psi''_n$ all vanish on the interval $(-a_n,a_n)$. By direct computations, we have, for $0\neq{x}\in\mathbb{R}$,
$$
D\psi_n(|x|^\alpha)=\frac{d}{dx}\psi_n(|x|^\alpha)=\psi'_n(|x|^\alpha)\cdot \alpha x\cdot |x|^{\alpha-2}
$$
and
\begin{equation*}
D^2\psi_n(|x|^\alpha)=\psi''_n(|x|^\alpha)\cdot \alpha^2 |x|^{2\alpha-2}+\psi'_n(|x|^\alpha)\cdot \alpha(\alpha-1) |x|^{\alpha-2} \label{D^2}.
\end{equation*}

Next we suppose that $X$ and $\widetilde{X}$ are two solutions for (\ref{eq-SDE2}) of the following forms:
\begin{equation*}
\begin{split}
X(t)=X_0&+\int_{0}^{t}\sigma(X(s))dB_s+\int_{0}^{t}\int_{U_1}c_1(X(s-),u)\widetilde{N_1}(ds,du)\\
&+\int_{0}^{t}b(X(s))ds+\int_{0}^{t}\int_{U_3}c_2(X(s-),u){N_2}(ds,du)\label{solution1}
\end{split}
\end{equation*}
and
\begin{equation*}
\begin{split}
\widetilde{X}(t)=X_0&+\int_{0}^{t}\sigma(\widetilde{X}(s))dB_s+\int_{0}^{t}\int_{U_1}c_1(\widetilde{X}(s-),u)\widetilde{N_1}(ds,du)\\
&+\int_{0}^{t}b(\widetilde{X}(s))ds+\int_{0}^{t}\int_{U_3}c_2(\widetilde{X}(s-),u){N_2}(ds,du)\label{solution2}
\end{split}
\end{equation*}
for all $t\geq0$, where $x,\widetilde{x}\in\mathbb{R}$.

Denote $\Delta_t:=\widetilde{X}(t)-X(t)$ for all $t\geq0$ and define
$$
S_{\delta_0}=\inf\left\{t\geq0:|\Delta_t|\geq\delta_0\right\}=\inf\left\{t\geq0:|\widetilde{X}(t)-X(t)|\geq\delta_0\right\}.
$$
For $R>0$, let
$$
\tau_R:=\inf\left\{t\geq0:\max\left\{{|\widetilde{X}(t)|}\ ,{|X(t)|}\right\}\geq{R}\right\}.
$$
Then, by Theorem \ref{thm-explode}, we have $\tau_R\rightarrow\infty$ a.s. as $R\rightarrow\infty$. Denote $t'=t\wedge{\tau_R}\wedge{S_{\delta_0}}$ and
$$
\Delta_{c_i}=c_i(\widetilde{X}(s-),u)-c_i(X(s-),u), \quad i=1,2.
$$
Applying Lemma \ref{l2.1}, we have
\begin{align*}
\mathbb{E}\left[\psi_n(|\Delta_{t'}|^\alpha)\right]&=\mathbb{E}\left[\int_{0}^{t'}I_{\{\Delta_s\neq0\}}\bigg\{D\psi_n(|\Delta_s|^\alpha)(b(\widetilde{X}(s))-b(X(s)))\right.\\
&\quad \left. \mbox{} +\frac{1}{2}D^2\psi_n(|\Delta_s|^\alpha)|\sigma(\widetilde{X}(s))-\sigma(X(s))|^2ds\bigg\}\right]\\
&\quad \mbox{}+\mathbb{E}\left[\int_{0}^{t'}\int_{U_1}\{\psi_n(|\Delta_s+
\Delta_{c_1}|^\alpha)-\psi_n(|\Delta_s|^\alpha)-I_{\{\Delta_s\neq0\}}D\psi_n(|\Delta_s|^\alpha)\cdot\Delta_{c_1}\}\nu_1(du)ds\right.\\
&\quad \left. \mbox{} +\int_{0}^{t'}\int_{U_3}\{\psi_n(|\Delta_s+\Delta_{c_2}|^\alpha)-\psi_n(|\Delta_s|^\alpha)\}\nu_2(du)ds\right]\\
&=J_1+J_2.
\end{align*}
Since
$$
|\psi'_n(r)|\leq{1}, \quad \psi''_n(r)\leq{\frac{2}{n\rho(r)}I_{(a_n,a_{n-1})}(r)},
$$
it follows from Assumption \ref{ass-a} that
\begin{align*}
J_1&\leq \mathbb{E}\Bigg[\int_{0}^{t'}I_{\{\Delta_s\neq0\}}\bigg\{|\psi'_n(|\Delta_s|^\alpha)|\cdot|\alpha|\cdot|\Delta_s|^{\alpha-2}(\widetilde{X}(s)-X(s))
(b(\widetilde{X}(s))-b(X(s)))\\
&\quad \mbox{}+\frac{1}{2}\left\{|\psi''_n(|\Delta_s|^\alpha)|\cdot \alpha^2|\Delta_s|^{2\alpha-2}+|\alpha(\alpha-1)|\cdot\psi'_n(|\Delta_s|^\alpha)\cdot |\Delta_s|^{\alpha-2}\right\}|\sigma(\widetilde{X}(s))-\sigma(X(s))|^2ds\bigg\}\Bigg]\\
&\mbox{} \leq \mathbb{E}\Bigg[\int_{0}^{t'}I_{\{\Delta_s\neq0\}}\bigg[{|\alpha|\cdot|\Delta_s|^{\alpha-2}{|\Delta_s|^{2-\alpha}}\rho(|\Delta_s|^\alpha)}\\
&\quad \mbox{}+\frac{1}{2}\left\{\frac{2}{n\rho(|\Delta_s|^\alpha)}I_{(a_n,a_{n-1})}(|\Delta_s|^\alpha)\alpha^2|\Delta_s|^{2\alpha-2}+
|\alpha(\alpha-1)|\cdot |\Delta_s|^{\alpha-2}\right\}{|\Delta_s|^{2-\alpha}}{\rho(|\Delta_s|^\alpha)}\bigg]ds\Bigg]\\
&\mbox{}\leq \mathbb{E}\left[\int_{0}^{t'}(\frac{1}{2}|\alpha(\alpha-1)|+|\alpha|)\cdot{\rho(|\Delta_s|^\alpha)}
\mbox{}+\frac{\alpha^2|\Delta_s|^\alpha}{n}I_{(a_n,a_{n-1})}(|\Delta_s|^\alpha)ds\right]\\
&\mbox{} \leq \left(\frac{1}{2}|\alpha(\alpha-1)|+|\alpha|\right)\mathbb{E}\bigg[\int_{0}^{t'}{\rho(|\Delta_s|^\alpha)}ds\bigg]
+\frac{\alpha^2a_{n-1}^{\alpha}}{n}t'.
\end{align*}
Regarding $J_2$, by Lagrange's mean value theorem and the fact that
$|\psi'_n(r)|\leq{1}$, we have the following cases:

Case I. For $0<\alpha\leq1$, since $(A+B)^\alpha{\leq}A^\alpha+B^\alpha$ for all $A,B\geq0$, we know that there exists some $\xi_1\in[|\Delta_s|^\alpha,(|\Delta_s|+|\Delta_{c_i}|)^\alpha]$ such that
\begin{align*}
\psi_n(|\Delta_s+\Delta_{c_i}|^\alpha)-\psi_n(|\Delta_s|^\alpha)&\leq \psi_n(|\Delta_s|^\alpha+|\Delta_{c_i}|^\alpha)-\psi_n(|\Delta_s|^\alpha)\\
&\leq|\psi'_n(\xi_1)|\cdot ||\Delta_s|^\alpha+|\Delta_{c_i}|^\alpha-|\Delta_s|^\alpha|\\
&\leq|\Delta_{c_i}|^\alpha.
\end{align*}

Case II. For $1<\alpha<+\infty$, since $(A+B)^{\alpha-1}\leq (2^{\alpha-2}+1)(A^{\alpha-1}+B^{\alpha-1})$ for all $A,B\geq0$, there exists some $\xi_2\in[\min\{\Delta_s,\Delta_s+\Delta_{c_i}\}, \max\{\Delta_s,\Delta_s+\Delta_{c_i}\}]$ such that
\begin{align*}
\psi_n(|\Delta_s+\Delta_{c_i}|^\alpha)-\psi_n(|\Delta_s|^\alpha)&\mbox{}\leq \alpha|\psi'_n(|\xi_2|^\alpha)|\cdot |\xi_2|^{\alpha-1}\cdot |\Delta_s+\Delta_{c_i}-\Delta_s|\\
&\mbox{}\leq \alpha(|\Delta_s|+|\Delta_{c_i}|)^{\alpha-1}|\Delta_{c_i}|\\
&\mbox{}\leq \alpha(2^{\alpha-2}+1)(|\Delta_s|^{\alpha-1}|\Delta_{c_i}|+|\Delta_{c_i}|^{\alpha}),
\end{align*}
where the second inequality follows from
$$
0\leq |\xi_2|\leq \max\{|\Delta_s|,|\Delta_s+\Delta_{c_i}|\}\leq |\Delta_s|+|\Delta_{c_i}|.
$$
Thus,
\begin{align*}
J_2
&\leq \mathbb{E}\left[\int_{0}^{t'}[\alpha(2^{\alpha-2}+1)+1]{\rho(|\Delta_s|^\alpha)}+\alpha{\rho(|\Delta_s|^\alpha)}+[\alpha(2^{\alpha-2}+1)+1]{\rho(|\Delta_s|^\alpha)}\right]ds\\
&\leq [\alpha(2^{\alpha}+3)+2]\cdot\mathbb{E}\left[\int_{0}^{t'}{\rho(|\Delta_s|^\alpha)}ds\right]
\end{align*}
and so
$$
\mathbb{E}\left[\psi_n(|\Delta_{t'}|^\alpha)\right]\leq p(\alpha)\mathbb{E}\left[\int_{0}^{t'}{\rho(|\Delta_s|^\alpha)}ds\right]+\frac{\alpha^2a_{n-1}^{\alpha}}{n}t,
$$
where
$$
p(\alpha)=\frac{1}{2}|\alpha(\alpha-1)|+|\alpha|+\alpha(2^{\alpha}+3)+2.
$$
Since $\lim \limits_{n\rightarrow\infty}\psi_n(r)=|r|$, letting $n\rightarrow\infty$ yields
\begin{align*}
\mathbb{E}\left[|\Delta_{t'}|^\alpha\right]&\leq p(\alpha)\mathbb{E}\bigg[\int_{0}^{t'}{\rho(|\Delta_s|^\alpha)}ds\bigg]\\
&\leq p(\alpha)\mathbb{E}\bigg[\int_{0}^{t\wedge{\tau_R}}{\rho\left(|\Delta_{s\wedge{S_{\delta_0}}}|^\alpha\right)}ds\bigg]\\
&\leq p(\alpha)\int_{0}^{t}{\rho\left(\mathbb{E}(|\Delta_{s\wedge{S_{\delta_0}}\wedge{\tau_R}}|^\alpha)\right)}ds,
\end{align*}
where the last inequality follows from Jensen's inequality.  It follows from Theorem \ref{thm-explode}, Fatou's lemma and the monotone convergence theorem that
$$
\mathbb{E}[|\Delta_{t\wedge{S_{\delta_0}}}|^\alpha]\leq \lim_{R\rightarrow\infty}\mathbb{E}[|\Delta_{t'}|^\alpha]
\leq p(\alpha)\int_{0}^{t}{\rho\left(\mathbb{E}(|\Delta_{s\wedge{S_{\delta_0}}}|^\alpha)\right)}ds.
$$
Applying Lemma \ref{th-bihari} yields that $\mathbb{E}[|\Delta_{t\wedge{S_{\delta_0}}}|^\alpha] \rightarrow0$ and so $\Delta_{t\wedge{S_{\delta_0}}}=0$ a.s..

On the set $\{S_{\delta_0}\leq t\}$, we have $|\Delta_{t'}|\geq \delta_0$. Observing that $0=\mathbb{E}[|\Delta_{t\wedge{S_{\delta_0}}}|^{\alpha}]\geq \delta_0^{\alpha}\mathbb{P}\{S_{\delta_0}\leq t\}$,
we have $\mathbb{P}\{S_{\delta_0}\leq t\}=0$ and hence $\Delta_t=0$  a.s., which is the desired result.
\end{proof}

\begin{remark}\label{tech}
We would like to point out that the proof method of Theorem \ref{th-unique} is similar to the one of Theorem 2.4 in \cite{Xi2017Jump}.
\end{remark}

\begin{thm}\label{existence}
Under Assumptions \ref{ass-b} and \ref{ass-a}, JSDE (\ref{eq-JSDE}) has a unique non-explosive strong solution.
\end{thm}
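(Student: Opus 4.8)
The plan is to establish existence and pathwise uniqueness first for the reduced equation (\ref{eq-SDE2}) and then to lift both conclusions to (\ref{eq-JSDE}) through Lemma \ref{dengjiajie}, reading off non-explosion directly from Theorem \ref{thm-explode}. Pathwise uniqueness for (\ref{eq-SDE2}) is exactly the content of Theorem \ref{th-unique}, so the only genuinely new ingredient needed is the \emph{existence} of a (weak) solution: the Yamada--Watanabe principle for jump-type equations then upgrades weak existence together with pathwise uniqueness to the existence of a unique strong solution of (\ref{eq-SDE2}).

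First I would produce a weak solution of (\ref{eq-SDE2}) by approximation. Since $b$ and $\sigma$ are merely continuous and $c_1,c_2$ are Borel, I would mollify and truncate them to obtain globally Lipschitz coefficients $b_m,\sigma_m,c_{1,m},c_{2,m}$ converging locally uniformly to $b,\sigma,c_1,c_2$ and still obeying the growth bound of Assumption \ref{ass-b}$(\romannumeral3)$ uniformly in $m$ (recall that $\nu_2(U_3)\le M<\infty$). The classical Lipschitz theory then furnishes, on a fixed stochastic basis carrying $B$, $N_1$ and $N_2$, a unique strong solution $X^{(m)}$ of the $m$-th approximating equation.

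Next I would derive estimates that are uniform in $m$. Applying It\^o's formula (Lemma \ref{l2.1}) to the concave Lyapunov function $\phi(x)=\exp\{\int_0^x \frac{ds}{s\Upsilon(s)+1}\}$ introduced in Theorem \ref{thm-explode}, together with Assumption \ref{ass-b}$(\romannumeral3)$ and Gronwall's inequality, yields $\sup_m \mathbb{E}[\phi(|X^{(m)}(t)|^2)]<\infty$ on every finite time interval, which controls the tails $\mathbb{P}(|X^{(m)}(t)|>R)$ uniformly in $m$ since $\phi(x)\to\infty$. Combining this with the uniform bounds on the increments coming from the same computation gives tightness of the laws of $(X^{(m)},B,N_1,N_2)$ in the Skorokhod space. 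I would then extract a weakly convergent subsequence and, invoking the continuity of $b,\sigma$ and the local-uniform convergence of the approximants, identify the limit as a weak solution of (\ref{eq-SDE2}); the compensated small-jump integral driven by $N_1$ is treated as an $L^2$-martingale so that passage to the limit preserves the relevant martingale-problem identities.

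With weak existence and pathwise uniqueness both in hand, the Yamada--Watanabe theorem produces a unique strong solution of (\ref{eq-SDE2}). Lemma \ref{dengjiajie} then transfers the existence of a strong solution and its pathwise uniqueness from (\ref{eq-SDE2}) to (\ref{eq-JSDE}), and Theorem \ref{thm-explode}, which requires only Assumption \ref{ass-b}, guarantees that this solution does not explode in finite time, completing the argument. The step I expect to be the main obstacle is the weak-existence construction: specifically, controlling the possibly infinite-activity compensated jumps of $N_1$ in the tightness and limit-identification argument, and checking that the local-uniform convergence of the mollified coefficients is strong enough to push the nonlinear integrands through the weak limit. It is precisely the super-linear growth control of Assumption \ref{ass-b}, funneled through the concave Lyapunov function $\phi$, that prevents mass from escaping to infinity and makes the uniform estimate—and hence the whole scheme—work.
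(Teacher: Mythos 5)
Your proposal follows essentially the same route as the paper: non-explosion from Theorem \ref{thm-explode}, pathwise uniqueness for (\ref{eq-SDE2}) from Theorem \ref{th-unique}, weak existence combined with a Yamada--Watanabe argument to obtain a strong solution of (\ref{eq-SDE2}), and Lemma \ref{dengjiajie} to transfer everything to (\ref{eq-JSDE}). The only difference is that the paper outsources the weak-existence/Yamada--Watanabe step entirely to the proof of Theorem 2.2 of Li and Mytnik \cite{Zenghu2009Strong}, whereas you sketch the mollification, tightness and limit-identification construction explicitly; your sketch is consistent with that reference.
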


\begin{proof}
Similar to the proof of Theorem 2.2 in \cite{Zenghu2009Strong}, applying Theorems \ref{thm-explode} and \ref{th-unique}, we know that there exists a unique non-explosive strong solution for (\ref{eq-SDE2}). Thus, by Lemma \ref{dengjiajie}, there also exists a unique strong non-explosive solution for (\ref{eq-JSDE}).
\end{proof}

\begin{cor}\label{alpha=1 cor}
Under Assumptions \ref{ass-b} and \ref{as-cor}, JSDE (\ref{eq-JSDE}) has a unique non-explosive strong solution.
\end{cor}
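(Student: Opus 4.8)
The plan is to follow the reduction already used for Theorem \ref{existence}: by Lemma \ref{dengjiajie} it suffices to treat the truncated equation (\ref{eq-SDE2}), and by Theorem \ref{thm-explode} Assumption \ref{ass-b} alone rules out explosion, while the existence of a (local) strong solution is obtained exactly as in Theorem \ref{existence} via the construction of \cite{Zenghu2009Strong}. Thus the only genuinely new point is the pathwise uniqueness of strong solutions for (\ref{eq-SDE2}) under Assumption \ref{as-cor}. I would prove it by re-running the Yamada--Watanabe scheme of Theorem \ref{th-unique} with $\alpha=1$, but now building the approximating functions $\psi_n$ from the (not necessarily concave) function $\rho_2$ (only $\int_{0+}ds/\rho_2(s)=\infty$ is needed for that construction); the essential new idea is that $\sigma$ and $c_1$ are pushed into the \emph{vanishing} second-order terms, while only $b$ and $c_2$ survive and are controlled by the concave $\rho_1$.

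Concretely, let $X,\widetilde X$ be two solutions, set $\Delta_s=\widetilde X(s)-X(s)$ and $t'=t\wedge\tau_R\wedge S_{\delta_0}$ as in Theorem \ref{th-unique}. Applying Lemma \ref{l2.1} to $\psi_n(\Delta_{t'})$ splits $\mathbb{E}[\psi_n(|\Delta_{t'}|)]$ into a drift term, a diffusion term, a compensated $U_1$-jump term and a $U_3$-jump term. For the diffusion term, Assumption \ref{as-cor}(ii) gives the cancellation $\tfrac12\psi_n''(|\Delta_s|)|\sigma(\widetilde X)-\sigma(X)|^2\le \tfrac12\cdot\frac{2}{n\rho_2(|\Delta_s|)}\rho_2(|\Delta_s|)=\frac1n$. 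The key step is the compensated $U_1$-jump term, where I would use that $c_1(\cdot,u)$ is non-decreasing: then $\Delta_{c_1}$ has the same sign as $\Delta_s$, so $|\Delta_s+\Delta_{c_1}|=|\Delta_s|+|\Delta_{c_1}|$ and $D\psi_n(|\Delta_s|)\Delta_{c_1}=\psi_n'(|\Delta_s|)|\Delta_{c_1}|$, whence the integrand equals the nonnegative quantity $D_n(a,b):=\psi_n(a+b)-\psi_n(a)-\psi_n'(a)b$ with $a=|\Delta_s|$, $b=|\Delta_{c_1}|$. Writing $D_n(a,b)=\int_0^b (b-r)\psi_n''(a+r)\,dr$ and using $\psi_n''(a+r)\le \frac{2}{n\rho_2(a+r)}\le\frac{2}{n\rho_2(a)}$ (monotonicity of $\rho_2$), one gets $D_n(a,b)\le \frac{b^2}{n\rho_2(a)}$, and integrating against $\nu_1$ together with Assumption \ref{as-cor}(ii) yields $\int_{U_1}D_n\,\nu_1(du)\le\frac{1}{n\rho_2(|\Delta_s|)}\int_{U_1}|\Delta_{c_1}|^2\nu_1(du)\le\frac1n$. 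Hence the diffusion and compensated-jump terms together contribute at most $\frac{2}{n}\mathbb{E}[t']\le\frac{2t}{n}$, which disappears as $n\to\infty$.

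It remains to handle the drift term $\frac{\psi_n'(|\Delta_s|)}{|\Delta_s|}(\widetilde X-X)(b(\widetilde X)-b(X))$ and the $U_3$-jump term $\int_{U_3}[\psi_n(|\Delta_s+\Delta_{c_2}|)-\psi_n(|\Delta_s|)]\nu_2(du)$. Because $\psi_n'<1$ would weaken the (possibly dissipative) drift, I would not combine them at finite $n$; instead I would first let $n\to\infty$, using monotone convergence on the left ($\psi_n(|\Delta_{t'}|)\uparrow|\Delta_{t'}|$) and dominated convergence on the right (the drift is dominated by $|b(\widetilde X)-b(X)|$ and the jump by $\int_{U_3}|\Delta_{c_2}|\nu_2$, both bounded on $\{s\le\tau_R\}$ since $\nu_2(U_3)<\infty$). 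Since $\psi_n'(|\Delta_s|)\to1$, this gives $\mathbb{E}[|\Delta_{t'}|]\le \mathbb{E}\int_0^{t'}\big[\frac{(\widetilde X-X)(b(\widetilde X)-b(X))}{|\Delta_s|}+\int_{U_3}(|\Delta_s+\Delta_{c_2}|-|\Delta_s|)\nu_2(du)\big]ds$. Writing $P=(\widetilde X-X)(b(\widetilde X)-b(X))$, $Q=\int_{U_3}|\Delta_{c_2}|\nu_2$ and $r=|\Delta_s|\le\delta_0\le1$, Assumption \ref{as-cor}(i) reads $P+Q\le r\rho_1(r)$, and a short case check on the sign of $P$ (using $r\le1$ and $P\le r\rho_1(r)$) gives $\frac{P}{r}+Q\le\rho_1(r)$; since $|\Delta_s+\Delta_{c_2}|-|\Delta_s|\le|\Delta_{c_2}|$, this yields $\mathbb{E}[|\Delta_{t'}|]\le \mathbb{E}\int_0^{t'}\rho_1(|\Delta_s|)\,ds$.

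Finally, exactly as in Theorem \ref{th-unique}, Jensen's inequality (here the concavity of $\rho_1$ enters), followed by $R\to\infty$ (legitimate by the non-explosion of Theorem \ref{thm-explode}) and Lemma \ref{th-bihari} (with $f\equiv0$, using $\int_{0+}ds/\rho_1(s)=\infty$), force $\mathbb{E}[|\Delta_{t\wedge S_{\delta_0}}|]=0$; then $\delta_0\,\mathbb{P}(S_{\delta_0}\le t)\le\mathbb{E}[|\Delta_{t\wedge S_{\delta_0}}|]=0$ removes the localization and gives $\Delta_t=0$ a.s. Pathwise uniqueness for (\ref{eq-SDE2}), and hence for (\ref{eq-JSDE}) via Lemma \ref{dengjiajie}, follows. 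I expect the main obstacle to be the compensated small-jump estimate: it is precisely the monotonicity of $c_1$ (turning $|\Delta_s+\Delta_{c_1}|$ into $|\Delta_s|+|\Delta_{c_1}|$) together with the $L^2$-control in Assumption \ref{as-cor}(ii) that lets this term behave like the diffusion term and vanish; the secondary subtlety is that the dissipativity encoded in Assumption \ref{as-cor}(i) can only be exploited after passing to the limit $\psi_n'\to1$.
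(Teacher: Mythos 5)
Your proof is correct and follows essentially the same route as the paper: the same Yamada--Watanabe functions $\psi_n$ built from $\rho_2$, the same use of the monotonicity of $c_1(\cdot,u)$ to turn the compensated $U_1$-jump term into a second-order Taylor remainder killed by the $\tfrac{2}{n\rho_2}$ bound on $\psi_n''$ together with Assumption \ref{as-cor}(ii) (you use the integral form of the remainder where the paper uses the Lagrange form and the inequality $|\eta|\geq|\Delta_s|$), and the same Jensen--Bihari conclusion. The only divergence is that the paper dismisses the drift and $U_3$-jump terms with ``similar arguments to Theorem \ref{th-unique}'', whereas you correctly observe that Assumption \ref{as-cor}(i) only controls the \emph{sum} of the two contributions (so the possibly negative drift cannot simply be multiplied by $\psi_n'\leq 1$ at finite $n$) and resolve this by letting $n\to\infty$ in those terms first --- a legitimate and slightly more careful treatment of a step the paper's sketch glosses over.
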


\begin{proof}
Similar to the proof of Theorem \ref{th-unique}, replacing $\rho(r)$ by $\rho_2(r)$, we have
$$
D^2\psi_n(|\Delta_s|)|\sigma(\widetilde{X}(s))-\sigma(X(s))|^2\leq \frac{2}{n\rho_2(|\Delta_s|)}I_{(a_n,a_{n-1})}(|\Delta_s|)\cdot{\rho_2(|\Delta_s|)}=\frac{2}{n}I_{(a_n,a_{n-1})}.
$$
For convenience, we denote $\Delta_{c_1}=c_1(\widetilde{X}(s-),u)-c_1(X(s-),u)$. By Taylor's expansion, there exists some $\eta=\Delta_s+\theta \Delta_{c_1}$ with a constant $\theta\in(0,1)$ such that
\begin{eqnarray}\label{+1}
&& \int_{U_1}\left[\psi_n(|\Delta_s+\Delta_{c_1}|)-\psi_n(|\Delta_s|)-I_{\{\Delta_s\neq0\}}D\psi_n(|\Delta_s|)\cdot(\Delta_{c_1})\right]\nu_1(du)\nonumber\\
&=&\int_{U_1}\frac{1}{2}D^2\psi_n(|\eta|)\cdot |\Delta_{c_1}|^2\nu_1(du).
\end{eqnarray}
Since $c_1(x,u)$ is non-decreasing for fixed $u$, we have $\Delta_s\cdot\Delta_{c_1}\geq0$ and $|\eta|\geq|\Delta_s|$.  Thus, it follows from \eqref{+1} that
\begin{align*}
&\quad \mbox{}\int_{U_1}\left[\psi_n(|\Delta_s+\Delta_{c_1}|)-\psi_n(|\Delta_s|)-I_{\{\Delta_s\neq0\}}\cdot D\psi_n(|\Delta_s|)\cdot(\Delta_{c_1})\right]\nu_1(du)\\
&\mbox{}\leq \frac{1}{2}\cdot\frac{2}{n\rho_2(|\Delta_s|)}\int_{U_1}|\Delta_{c_1}|^2I_{(a_n,a_{n-1})}(|\eta|)\nu_1(du)\\
&\mbox{}\leq \frac{I_{(a_n,a_{n-1})}}{n}.
\end{align*}

The rest proof can be completed by the similar arguments to Theorems \ref{th-unique} and \ref{existence}, and so we omit it here.
\end{proof}

\begin{remark}
We would like to mention that Corollary \ref{alpha=1 cor} can be easily extended to the multi-dimensional case (see Theorem 3.3 of Fu and Li \cite{FU2010306}).
\end{remark}

\begin{example}
Consider the following SDE:
\begin{align}\label{exam.}
X(t)=&X_0-\int_{0}^{t}|X(s)|\ln|X(s)|ds+\int_{0}^{t}\sqrt{|X(s)|}dB_s+\int_{0}^{t}\int_{|u|\leq1}\sqrt{|X(s)|}\widetilde{N_1}(ds,du)\nonumber\\
&+\int_{0}^{t}\int_{|u|>1}\gamma|u| X(s){N_2}(ds,du).
\end{align}
Here $\gamma$ is a positive constant such that $\int_{|u|\leq1}|\gamma u|^2\nu(du)=1$.
It is easy to show that, for any $x>0$, the coefficient $b(x)=-x\cdot \ln x$ satisfies Assumptions \ref{ass-c} and \ref{ass-b}, and for any $x\geq0$, the coefficient $\sigma(x)=\sqrt x$ satisfies Assumption \ref{ass-b}. Thus, $b(x)$ and $\sigma(x)$ are both non-Lipschitzian due to
$$\lim \limits_{x\rightarrow 0^+}b'(x)=\lim \limits_{x\rightarrow 0^+}\sigma'(x)=+\infty.$$
Furthermore,
\begin{center}
$|b(x)-b(y)|=|\int_{x}^{y}1+\ln t\;dt|\leq \int_{0}^{|x-y|}|1+\ln t|\;dt=b(|x-y|)$
\end{center}
for all $0<x,y<\frac{1}{e}$, and
 \begin{center}
 $(\sigma(x)-\sigma(y))^2\leq |x-y|$
\end{center}
for all $x,y\geq0$. Thus,  the coefficients of (\ref{exam.}) satisfy Assumptions \ref{ass-b} and \ref{as-cor}. By Corollary \ref{alpha=1 cor}, we know that (\ref{exam.}) has a unique non-explosive strong solution.
\end{example}

\section{Non-confluent Property}
\paragraph{}
In this section, we present the non-confluent property of strong solutions to (\ref{eq-JSDE}).
\begin{defn}
Suppose that (\ref{eq-JSDE}) has a unique non-explosive strong solution $X(t)$ for any initial value $X_0$. Then $X(t)$ is said to have the non-confluent property if,  for any $x,y\in\mathbb{R}$ with $x\not=y$,
\[\mathbb{P}\{X^x(t)\neq X^y(t),\;for\;all\;t\geq0\}=1,\]
where $X^x$ and $X^y$ denote the strong solutions of (\ref{eq-JSDE}) with initial conditions $x$ and $y$, respectively.
\end{defn}

\begin{thm}\label{nonconfluence}
Suppose that Assumption \ref{ass-confluence} holds and (\ref{eq-JSDE}) has a unique non-explosive strong solution $X(t)$ for any initial value $X(0)=x \in \mathbb{R}$.
Then $X(t)$ has the non-confluent property.
\end{thm}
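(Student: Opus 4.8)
The plan is to track the difference process $Z(t):=X^x(t)-X^y(t)$, which starts at $Z(0)=x-y\neq 0$, and to show it can never return to $0$. The natural device is the function $\Phi(z)=-\log|z|$, which is $C^2$ away from the origin and blows up to $+\infty$ exactly as $z\to 0$; proving non-confluence then amounts to showing that $\Phi(Z(t))$ cannot explode in finite time. Before any computation I would record one structural consequence of \eqref{condition nu}: for $\nu_i$-a.e.\ $u$ and every $x\neq y$ one has $|x-y+c_i(x,u)-c_i(y,u)|>\delta|x-y|$, so that whenever $Z(s-)\neq 0$ a jump satisfies $|Z(s-)+\Delta_{c_i}|>\delta|Z(s-)|>0$. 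Hence no jump can land on $0$, the approach to $0$ must be continuous, and the first hitting time $T_0$ of $0$ satisfies $T_0=\lim_{\varepsilon\to 0}\tau_\varepsilon$ with $\tau_\varepsilon:=\inf\{t:|Z(t)|\leq\varepsilon\}$ and $\{T_0\leq s\}\subseteq\{\tau_\varepsilon\leq s\}$ for every $\varepsilon$.

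Next I would localize and apply the It\^o--L\'evy formula (Lemma \ref{l2.1}) to $\Phi(Z(t))$ on the interval $[0,t\wedge\tau_\varepsilon\wedge\tau_R]$, where $\tau_R:=\inf\{t:\max\{|X^x(t)|,|X^y(t)|\}\geq R\}\to\infty$ a.s.\ by the standing non-explosion hypothesis. On this interval $Z$ stays in $\{\varepsilon\leq|z|\leq R'\}$, so $\Phi$ is applied away from its singularity and all stochastic integrals are genuine martingales. The target is a generator estimate $\mathcal{L}\Phi(Z(s))\leq K$ with $K$ independent of $\varepsilon,R$. Since $\Phi'(z)=-1/z$ and $\Phi''(z)=1/z^2$, the continuous part is controlled by Assumptions \ref{ass-confluence}(i)--(ii): the drift and diffusion contributions are dominated by $|Z|^\alpha\rho(|Z|^{-\alpha})$, and concavity of $\rho$ makes $r\mapsto\rho(r)/r$ non-increasing, so with $s=|Z|^{-\alpha}\to\infty$ the quantity $|Z|^\alpha\rho(|Z|^{-\alpha})=\rho(s)/s$ stays bounded as $|Z|\to 0$.

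The genuinely delicate terms are the two jump integrals, and I expect these to be the main obstacle. Writing $\Delta_{c_i}=c_i(X^x(s-),u)-c_i(X^y(s-),u)$ and $w=\Delta_{c_i}/Z(s-)$, the integrands reduce to $-\log|1+w|+w$ for the compensated measure $\widetilde{N_1}$ and to $-\log|1+w|$ for $N_2$. Here \eqref{condition nu} must be used quantitatively, not merely qualitatively: it guarantees $|1+w|>\delta$, which at once bounds $-\log|1+w|$ above by $-\log\delta$ and keeps the integrand away from the logarithmic singularity. Combining this with the elementary bound $-\log|1+w|+w\leq C|w|$ (and $-\log|1+w|\leq C|w|$) valid on $\{|1+w|>\delta\}$, each integral is dominated by $\frac{1}{|Z|}\int_{U_i}|\Delta_{c_i}|\,\nu_i(du)$, which Assumption \ref{ass-confluence}(iii) bounds by $|Z|^\alpha\rho(|Z|^{-\alpha})\leq C$; the large-jump region, where $|w|$ is bounded below, has finite $\nu_i$-mass and is absorbed by the same first-moment control. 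The delicate point throughout is that the singular test function forces one to exploit \eqref{condition nu} to bound the jump increments of $\Phi$ from above and to secure integrability against a possibly infinite $\nu_i$.

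Finally, taking expectations annihilates the martingale parts and yields $\mathbb{E}[\Phi(Z(t\wedge\tau_\varepsilon\wedge\tau_R))]\leq\Phi(Z(0))+Kt$ (invoking Lemma \ref{th-bihari} should the bound only take the form $K(1+\Phi)$). On $\{\tau_\varepsilon\leq t\wedge\tau_R\}$ one has $\Phi(Z(\tau_\varepsilon))=-\log\varepsilon$, so $(-\log\varepsilon)\,\mathbb{P}(\tau_\varepsilon\leq t\wedge\tau_R)\leq\Phi(Z(0))+Kt$. Letting $\varepsilon\to 0$ forces $\mathbb{P}(T_0\leq t\wedge\tau_R)=0$, and then $R\to\infty$ followed by $t\to\infty$ gives $\mathbb{P}(T_0<\infty)=0$, that is, $\mathbb{P}\{X^x(t)\neq X^y(t)\ \text{for all}\ t\geq 0\}=1$, which is the asserted non-confluent property.
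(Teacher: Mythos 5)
Your argument is correct in substance and follows the same architecture as the paper's proof (localize by stopping times, apply Lemma \ref{l2.1} to a test function that is singular at the origin, use \eqref{condition nu} to control the jump increments, then a Chebyshev-type estimate and passage to the limit), but it uses a genuinely different test function: you take $\Phi(z)=-\log|z|$, whereas the paper works with $R(z)=|z|^{-\alpha}$. The trade-offs are real. With $|z|^{-\alpha}$ the paper must prove the second-order inequality \eqref{R(x)} by hand, obtains a generator bound of the form $K_1|\Delta_s|^{-\alpha}+K_2\rho(|\Delta_s|^{-\alpha})$, and therefore has to close the estimate with the Bihari-type Lemma \ref{th-bihari} applied to $\rho_0(x)=K_1x+K_2\rho(x)$; the final step then reads $\mathbb{P}\{T_{1/n}<t\}\leq n^{-\alpha}\Omega^{-1}(\cdots)$, which degenerates when $\alpha=0$. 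With $-\log|z|$ the drift, diffusion and jump contributions are all dominated by $C_\delta\,|Z|^{\alpha}\rho(|Z|^{-\alpha})=C_\delta\,\rho(s)/s$ with $s=|Z|^{-\alpha}$, and concavity of $\rho$ makes $s\mapsto\rho(s)/s$ non-increasing, so on the localized event this is a constant (depending on $R$, not on $\varepsilon$ --- your claim of independence of $R$ is a slight overstatement, but harmless since you send $\varepsilon\to0$ before $R\to\infty$); no Bihari step is needed, and the argument covers $\alpha=0$ uniformly, which is a genuine advantage. Your quantitative use of \eqref{condition nu} via $|1+w|>\delta$ to get $-\log|1+w|+w\leq C_\delta|w|$ and $-\log|1+w|\leq C_\delta|w|$, followed by Assumption \ref{ass-confluence}(iii), is exactly the right replacement for the paper's inequality \eqref{R(x)}. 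Two small points to tidy up: on $\{\tau_\varepsilon\leq t\wedge\tau_R\}$ you only have $\Phi(Z(\tau_\varepsilon))\geq-\log\varepsilon$ (a jump may overshoot $\varepsilon$), and on the complementary event $\Phi$ can be as negative as $-\log(2R)$, so the Chebyshev step should carry an additive $\log(2R)$; and, like the paper, you implicitly need the drift bound in Assumption \ref{ass-confluence}(i) in two-sided form, since the sign of $\Phi'(Z)\,(b(\widetilde X)-b(X))$ is not controlled by a one-sided inequality alone.
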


\begin{proof}
Consider the function $R(x)=|x|^{-\alpha}$. For any $x,y\in \mathbb{R}$ with $x\neq0$ and $|x+y|\geq \delta|x|$, where $\delta$ is a fixed constant. We now claim that
\begin{equation}\label{R(x)}
R(x+y)-R(x)-DR(x)\cdot y=\frac{|x|^{\alpha}-|x+y|^{\alpha}}{|x+y|^{\alpha}\cdot |x|^{\alpha}}-DR(x)\cdot y\leq K\frac{|x|^{\alpha}+|x|^{\alpha-1}|y|}{|x|^{2\alpha}},
\end{equation}
where $K>0$ is a constant.   Indeed, it is enough to see that
$$
-DR(x)\cdot y=\alpha x|x|^{-\alpha-2}y\leq \alpha\frac{|x|^{\alpha-1}|y|}{|x|^{2\alpha}}.
$$
For $|x|\leq |x+y|$,  (\ref{R(x)})  is automatically satisfied.  Thus,  it is sufficient to consider the case for $|x|\geq |x+y|$. For $0\leq \alpha\leq1$,
$$
|x|^{\alpha}-|x+y|^{\alpha}\leq |x|^{\alpha}.
$$
For $1< \alpha< +\infty$, there exists some $\xi\in (|x+y|,|x|)$ such that
$$|x|^{\alpha}-|x+y|^{\alpha}=\alpha|\xi|^{\alpha-1}(|x|-|x+y|)\leq \alpha|x|^{\alpha-1}|y|.$$
Since $|x+y|\geq \delta|x|$, (\ref{R(x)}) holds for $K=\frac{1}{\delta^\alpha}(1+2\alpha)$.

For any $x,\widetilde{x}\in \mathbb{R}$ with $x\not= \widetilde{x}$, let $X(t)$ and $\widetilde{X}(t)$ be two strong solutions for (\ref{eq-JSDE}) of the following forms:
\begin{equation*}
\begin{split}
X(t)=x&+\int_{0}^{t}\sigma(X(s))dB_s+\int_{0}^{t}\int_{U_1}c_1(X(s-),u)\widetilde{N_1}(ds,du)\\
&+\int_{0}^{t}b(X(s))ds+\int_{0}^{t}\int_{U_2}c_2(X(s-),u){N_2}(ds,du),
\end{split}
\end{equation*}
\begin{equation*}
\begin{split}
\widetilde{X}(t)=\widetilde{x}&+\int_{0}^{t}\sigma(\widetilde{X}(s))dB_s+\int_{0}^{t}\int_{U_1}c_1(\widetilde{X}(s-),u)\widetilde{N_1}(ds,du)\\
&+\int_{0}^{t}b(\widetilde{X}(s))ds+\int_{0}^{t}\int_{U_2}c_2(\widetilde{X}(s-),u){N_2}(ds,du).
\end{split}
\end{equation*}
Denote $\Delta_t=\widetilde{X}(t)-X(t)$. Then $|\Delta_0|=|x-\widetilde{x}|>0$.  For any $\frac{1}{|\Delta_0|}<n\in \mathbb{N}$ and $R>\max\{|\widetilde{x}|,|x|\}$, define
\begin{align*}
T_{\frac{1}{n}}:&=\inf\left\{t\geq0:|\Delta_t|\leq \frac{1}{n}\right\},\\
T_0:&=\inf\{t\geq0:|\Delta_t|=0\},\\
\tau_R:&=\inf\left\{t\geq0:\max\{{|\widetilde{X}(t)|}\ ,{|X(t)|}\}\geq{R}\right\}.
\end{align*}
Obviously,  $T_0=\lim \limits_{n\rightarrow\infty}T_{\frac{1}{n}}$ and $\lim \limits_{R\rightarrow\infty}\tau_R=\infty$ a.s..

Let $t'=t\wedge{\tau_R}\wedge{T_{\frac{1}{n}}}$ and
$$
\Delta_{c_i}=c_i(\widetilde{X}(s-),u)-c_i(X(s-),u), \quad i=1,2.
$$
It follows from (\ref{condition nu}), \eqref{R(x)} and Lemma \ref{l2.1} that
\begin{align*}
\mathbb{E}\left[|\Delta_{t'}|^{-\alpha}\right]&=|\Delta_0|^{-\alpha}+\mathbb{E}\Bigg[\int_{0}^{t'}-\alpha\Delta_{s}\cdot|\Delta_{s}|^{-\alpha-2}\cdot (b(\widetilde{X}(s))-b(X(s)))\\
&\quad  \mbox{}+\frac{1}{2}|\sigma(\widetilde{X}(s))-\sigma(X(s))|^2\cdot \alpha(\alpha+1)\cdot|\Delta_{s}|^{-\alpha-2}ds\Bigg]\\
&\quad \mbox{}+\mathbb{E}\left[\int_{0}^{t'}\int_{U_1}\{R(\Delta_{s}+\Delta_{c_1})-R(\Delta_{s})-DR(\Delta_{s})\cdot \Delta_{c_1}\}\nu_1(du)ds\right.\\
&\quad \left. \mbox{}+\int_{0}^{t'}\int_{U_2}\{R(\Delta_{s}+\Delta_{c_2})-R(\Delta_{s})\}\nu_2(du)ds\right]\\
&\mbox{}\leq |\Delta_0|^{-\alpha}+\mathbb{E}\left[\int_{0}^{t'}(\alpha+\frac{1}{2}\alpha(\alpha+1))\cdot\rho(|\Delta_{s}|^{-\alpha})ds\right]\\
&\quad \mbox{}+\mathbb{E}\left[\int_{0}^{t'}\int_{U_1}K\left(|\Delta_{s}|^{-\alpha}+|\Delta_{s}|^{-\alpha-1}|\Delta_{c_1}|\right)\nu_1(du)ds\right.\\
&\quad \left. \mbox{}+\int_{0}^{t'}\int_{U_2}K'\left(|\Delta_{s}|^{-\alpha}+|\Delta_{s}|^{-\alpha-1}|\Delta_{c_2}|\right)\nu_2(du)ds\right]\\
&\mbox{} \leq |\Delta_0|^{-\alpha}+\mathbb{E}\left[\int_{0}^{t'}K_1|\Delta_{s}|^{-\alpha}+K_2\rho(|\Delta_{s}|^{-\alpha})ds\right],
\end{align*}
where
$$
K'=\frac{1}{\delta^\alpha}(1+\alpha), \quad K_1=M(K+K'),\quad  K_2=\alpha+\frac{1}{2}\alpha(\alpha+1)+K+K'.
$$
Let $\rho_0(x)=K_1x+K_2\rho(x)$. Then $\rho_0$ is also concave and non-decreasing with $\rho_0(0)=0$. Moreover, there exists some $x_0\geq0$ such that,  for any $x\in [0,x_0]$, either $\rho(x)\geq x$ or $\rho(x)\leq x$ holds. Thus,
$$
\int_{0}^{x_0}\frac{1}{\rho_0(s)}ds\geq \int_{0}^{x_0}\frac{1}{(K_1+K_2)\max\{\rho(s),s\}}ds=+\infty
$$
and consequently,
$$\int_{0+}\frac{1}{\rho_0(s)}ds=+\infty.$$
Let $u(t')=E\left[|\Delta_{t'}|^{-\alpha}\right]$. Then, by Jensen's inequality,
$$
0\leq u(t')\leq |\Delta_0|^{-\alpha}+\int_{0}^{t}\rho_0(u(s\wedge{\tau_R}\wedge{T_{\frac{1}{n}}}))ds.
$$
Applying Lemma \ref{th-bihari},
$$
0\leq u(t')\leq \Omega^{-1}\left(\Omega(|\Delta_0|^{-\alpha})+t'\right)<+\infty.
$$
Letting $R\rightarrow\infty$ and by Fatou's lemma, one has
$$
0\leq u\left(t\wedge{T_{\frac{1}{n}}}\right)=\mathbb{E}\left[|\Delta_{t\wedge{T_{\frac{1}{n}}}}|^{-\alpha}\right]\leq \Omega^{-1}\left(\Omega(|\Delta_0|^{-\alpha}\right)+t\wedge{T_{\frac{1}{n}}})\leq\Omega^{-1}\left(\Omega(|\Delta_0|^{-\alpha})+t\right)<+\infty.
$$
Since $|\Delta_{T_{\frac{1}{n}}}|\leq \frac{1}{n}$ holds on the set $\{T_{\frac{1}{n}}<t\}$ and $R(x)=|x|^{-\alpha}$ is non-increasing for $x>0$, it follows that
$$
(\frac{1}{n})^{-\alpha}\mathbb{P}\{T_{\frac{1}{n}}<t\}\leq \mathbb{E}\left[|\Delta_{t\wedge{T_{\frac{1}{n}}}}|^{-\alpha}\cdot I_{\{T_{\frac{1}{n}}<t\}}\right]\leq \mathbb{E}\left[|\Delta_{t\wedge{T_{\frac{1}{n}}}}|^{-\alpha}\right]\leq \Omega^{-1}\left(\Omega(|\Delta_0|^{-\alpha})+t\right)<+\infty
$$
and so
$$
\mathbb{P}\{T_{\frac{1}{n}}<t\}\leq n^{-\alpha}\cdot{\Omega^{-1}\left(\Omega(|\Delta_0|^{-\alpha})+t\right)}.
$$
This shows that $\mathbb{P}\{T_0<t\}=0$ holds for all $t\geq0$. Thus, letting $t\rightarrow\infty$, we have $\mathbb{P}\{T_0<\infty\}=0$. In other words, $|\Delta_0|>0$ a.s. on the interval $[0,\infty)$, which completes the proof.
\end{proof}

\begin{remark}
If $\alpha=0$, our results reduces to Corollary 3.3 in \cite{Xi2017Jump}.
\end{remark}

\begin{example}
Consider the following SDE:
\begin{align}\label{exam 2}
X(t)=&X_0-\int_{0}^{t}[X^{3}(s)+X^{\frac{1}{3}}(s)]ds+\int_{0}^{t}2X(s)dB_s+\int_{0}^{t}\int_{|u|\leq1}\gamma|u| X(s)\widetilde{N_1}(ds,du)\nonumber\\
&+\int_{0}^{t}\int_{|u|>1}\gamma|u| X(s){N_2}(ds,du).
\end{align}
Here $\gamma$ is a positive constant such that $\int_{|u|\leq1}|\gamma u|^2\nu(du)=1$.  Note that
$$
xb(x)=-x^{4}-x^{\frac{4}{3}}
$$
and
\begin{align*}
(x-y)(b(x)-b(y))=&-(x-y)(x^{\frac{1}{3}}-y^{\frac{1}{3}}+x^{3}-y^{3})\\
\leq &-(x^{\frac{1}{3}}-y^{\frac{1}{3}})^2\left[(x^{\frac{1}{3}}+{\frac{1}{2}}y^{\frac{1}{3}})^2+{\frac{3}{4}}y^{\frac{1}{3}}\right]-(x-y)^2\left((x+{\frac{1}{2}}y)^2
+{\frac{3}{4}}y^2\right).
\end{align*}
We can check that the coefficients of \eqref{exam 2} satisfy Assumptions \ref{ass-b}, \ref{ass-a} and \ref{ass-confluence} for $\alpha=0$. Thus, employing Theorem \ref{existence}, we see that (\ref{exam 2}) has a unique non-explosive strong solution $X(t)$ for any initial value $X(0)=x \in \mathbb{R}$. According to Theorem \ref{nonconfluence}, we know that $X(t)$ has the non-confluent property.
\end{example}

\section{Conclusions}
\paragraph{}

This paper is devoted to study some qualitative properties of strong solutions for a class of JSDEs with the super linear growth and non-lipschitz conditions. We have obtained the non-explosive property of strong solutions for JSDEs with the super linear growth condition by applying similar arguments in \cite{fang2003stochastic}. By employing the  Bihari-Lasalle inequality, we have also established  the pathwise uniqueness of strong solutions to JSDEs with the non-Lipschitz condition, in which $\mathbb{E}[|\widetilde{X}(t)-X(t)|^\alpha]$ vanishes up to an appropriately defined stopping time by constructing a sequence of smooth functions. Moreover, we have showed the non-confluent property of strong solutions for JSDEs under some mild conditions.

These findings of the research have led the authors to the following main contributions: (i) it was relaxed for the usual linear growth condition which guarantees the non-explosive property of solutions; (ii) a generalized non-Lipschitz condition was given to guarantee the existence and uniqueness of the solution to the JSDEs; (iii) the method developed by \cite{FU2010306, Xi2017Jump} also works for uniqueness problem with respect to the JSDEs under the non-Lipschitz condition constructed in this paper, i.e., the non-Lipschitz condition in our paper has universality; (iv) non-confluent property of strong solutions to the JSDEs has been obtained under the nonlinear condition.

We would like to mention that JSDEs considered in this paper are all driven by Brownian motions and Poisson processes. It is well known that JSDEs driven by L\'{e}vy processes have attracted much attention recently (see, for example, \cite{Geman2007, He2018, Landis2013, Yang19}).  Therefore, it would be crucial and interesting to extend the results of this paper to JSDEs driven by general L\'{e}vy processes. We also note that various theoretical results with applications for SDEs driven by fractional Brownian motions (fBms) have been studied extensively in literature; for instance, we refer the reader to \cite{Biagini2008, bin2018, Czichowsky2018, Kang19, Yue18} and the references therein. Thus, it would be important to extend our results to JSDEs driven by fBms. We plan to address these problems as we continue our research.

\section*{Acknowledgements}
The authors are grateful to the editors and reviewers whose helpful comments and suggestions have led to much improvement of the paper.

\end{document}